\newtheorem{lema}{Lemma}[section]
\newtheorem{theo}[lema]{Theorem}
\newtheorem*{theo*}{Theorem}
\newtheorem{prop}[lema]{Proposition}
\newtheorem{coro}[lema]{Corollary}
\newtheorem{conj}[lema]{Conjecture}
\theoremstyle{remark}
\newtheorem*{rema}{Remark}
\newcounter{teoremaganso}
\def\sideremark#1{\ifvmode\leavevmode\fi\vadjust{\vbox to0pt{\vss 
      \hbox to 0pt{\hskip\hsize\hskip1em           
 \vbox{\hsize2cm\tiny\raggedright\pretolerance10000
 \noindent #1\hfill}\hss}\vbox to8pt{\vfil}\vss}}}%
\title[The trigonometric moment problem ]{On the trigonometric moment problem }
\author{Amelia \'Alvarez, Jos\'{e} Luis Bravo, Colin Christopher}
\begin{document}

\begin{abstract}
The trigonometric moment problem arises from the study of one-parameter
families of centers in polynomial vector fields.
It asks for the classification of the trigonometric polynomials $Q$ which are
orthogonal to all powers of a trigonometric polynomial $P$.

We show that this problem has a simple and natural solution under certain conditions
on the monodromy group of the Laurent polynomial associated to $P$.  In the case of real trigonometric polynomials, which
is the primary motivation of the problem, our conditions are shown to hold for all
trigonometric polynomials of degree $15$ or less.  In the complex case, we show that
there are a small number of exceptional monodromy groups up to degree $30$ where the conditions fail to
hold and show how counter-examples can be constructed in several of these cases.

\end{abstract}
\thanks{
The first author was partially supported by Junta de Extremadura
and FEDER funds. The second author was partially supported by
Junta de Extremadura and a MCYT/FEDER grant number MTM2008-05460.
The first two authors are grateful to the University of Plymouth
for its kind hospitality during the preparation of this work.}

\maketitle

\section{Introduction}
This work is motivated by the following problem proposed by Briskin, 
Fran\c{c}oise and Yomdin~\cite{BFY}:
Given a trigonometric polynomial, $p$, determine all trigonometric polynomials, $q$,
such that the one-parameter Abel equation
\begin{equation}\label{eq:parametricAbel}
z'=p(w)z^2+\epsilon q(w)z^3,
\end{equation}
has a parametric center (i.e., for all $\epsilon$ we have $z(0)=z(2\pi)$ for every trajectory close to $z=0$).
This problem in turn is closely related to the Poincar\'e Center-Focus Problem for planar vector fields, as 
in many cases of interest there are transformations to the trigonometric Abel equation \eqref{eq:parametricAbel} (see \cite{Che}).

In particular,  a necessary and sufficient condition in order for \eqref{eq:parametricAbel} to have a parametric center to first order in $\epsilon$
(see \cite{BFY}) is
\begin{equation}\label{eq:polynomialMoment}
\int_0^{2\pi} P^k(w)dQ(w)=0,\quad k=0,1,2,\ldots
\end{equation}
where $P$ and $Q$ are primitives of $p$ and $q$. The trigonometric moment problem 
consists of given a trigonometric polynomial $P$, obtain all trigonometric polynomials $Q$ such
that \eqref{eq:polynomialMoment} holds. Thus, the moment problem is ``a first order''
approach to the parametric centers. Moreover, Briskin, Roytvarf and Yondim~\cite{BRY} 
have recently proved that the moment problem is indeed equivalent to the parametric center
problem ``at infinity''.

We shall obtain sufficient conditions on $P$ in order to solve the trigonometric moment problem. 
These conditions can be computed for any given trigonometric polynomial $P$ in terms of the
associated monodromy group. Moreover, since the number of possible monodromy groups
for any given degree is finite, they can be computed (theoretically at least) up
to any given degree. We have checked up to degree $15$ and they  
 hold for all trigonometric polynomials with real coefficients. The 
few exceptions we encounter have complex coefficients.

The moment problem has been extensively studied in the polynomial version, thus, where $p$ and $q$ 
are polynomials instead of trigonometric polynomials 
(see, e.g., \cite{BFY1}-\cite{C}, \cite{FPYZ}, \cite{PRitt} ,\cite{Pak2004}, \cite{PRY}). 
Recently it has  been completely solved by Muzychuk and Pakovich~\cite{Pakovich}
who show that $Q$ satisfies \eqref{eq:polynomialMoment}
if and only if there exist polynomials $P_k,Q_k,W_k$, $k=1,\ldots,l$, such that
\[
W_k(0)=W_k(2\pi),\ P(w)=P_k(W_k(w)),\ k=1,\ldots,l,\
\]
\[
Q(w)=\sum_{k=1}^l Q_k(W_k(w)).
\]
The preceding identities can be referred to as a ``weak composition condition''.

Inspired by this result, we study whether a similar result holds for the trigonometric
moment problem.  We fix a trigonometric polynomial $P$ and try to obtain all
trigonometric polynomials $Q$ such that
\begin{equation}\label{eq:trig-moment}
\int_{0}^{2\pi} P^k(\theta)\,dQ(\theta)=0,\quad k=0,1,2,\ldots
\end{equation}

Working instead with respect to a complex variable $z = e^{i\theta}$, this problem is equivalent
to the following: given a Laurent polynomial $P \in \mathcal{L}$, where $\mathcal{L}$ is the space of
complex Laurent polynomials $\mathbb{C}[z,z^{-1}]$, find all $Q \in \mathcal{L}$ such that
\begin{equation}\tag{$A$}\label{eq:moment}
\oint_{|z|=1} P^k(z)\,dQ(z)=0,\quad k=0,1,2,\ldots
\end{equation}

The main difference with the polynomial case is that (up to conjugation by a M\"obius transformation)
the factorization of Laurent polynomials can happen in two distinct ways: a Laurent polynomial $P$ can be written as $\tilde P(z^n)$, where
$\tilde P\in \mathcal{L}$, or as $\tilde P(W)$, where $\tilde P\in \mathbb{C}[z]$
and $W\in\mathcal{L}$ (see \cite{PakovichDec} or \cite{Zieve}).

These two types of factorization give rise to two different mechanisms which imply the
vanishing of the moments \eqref{eq:moment}.  Our aim is to show that these two mechanisms
are enough to solve the trigonometric moment problem for a large number of cases.

Firstly, if $P$ can be written as $\tilde P(z^l)$, $l>1$ with $\tilde P\in \mathcal{L}$,
then it is easy to see (Proposition~\ref{prop:zm}) that if, for some $Q\in \mathcal{L}$, we write $Q(z)=\tilde Q(z^l)+R(z)$
where the coefficients of $z^{nl}$ in $R\in \mathcal{L}$ vanish for all $n\in\mathbb{N}$, then $Q$ satisfies \eqref{eq:moment}
if and only if
\begin{equation}\label{eq:laurent-moment-red}
\oint_{|z|=1} \tilde P^k(z)\,d\tilde Q(z)=0,\quad k=0,1,2,\ldots
\end{equation}
Thus, the problem can be reduced to one of lower degree \eqref{eq:laurent-moment-red}.
We call this condition \eqref{conditB}:
\begin{equation}\tag{$B$}\label{conditB}
\begin{split}
P = \tilde{P}(z^l), \quad Q &= \tilde{Q}(z^l)+\sum_{l\nmid i}a_i z^i,\\
 \quad \oint_{|z|=1} \tilde P^k(z)\,d\tilde Q(z)&=0,\quad k=0,1,2,\ldots
 \end{split}
\end{equation}

On the other hand, if $P$ can be factorized as $P=P_k\circ W_k$ for some $P_k\in \mathbb{C}[z]$ and
$W_k\in\mathcal{L}$, $k=1,\ldots,l$,
then a straight-forward computation (Proposition~\ref{prop:laufactor}) shows that \eqref{eq:moment}
will be satisfied whenever we can write
\begin{equation}
Q=Q_1\circ W_1+\ldots+Q_l\circ W_l,
\end{equation}
for some $Q_k\in \mathbb{C}[z]$.  We call this condition \eqref{conditC}:
\begin{equation}\tag{$C$}\label{conditC}
\begin{split}
P = P_k(W_k)&,\quad P_k, Q_k \in \mathbb{C}[z],\quad W_k \in \mathcal{L}, \quad k = 1,\ldots,l, \\
& Q=Q_1\circ W_1+\ldots+Q_l\circ W_l.
\end{split}
\end{equation}

Our approach to these problems is via the monodromy group of the Laurent polynomial $P$.
In particular we find a condition \eqref{B*} on the monodromy group which is satisfied if and
only if $P =\tilde P(z^l)$ for some $l>1$ (Lemma~\ref{le:blocks})  and hence the problem is reducible to one of lower degree,
and a condition \eqref{C*} which if satisfied implies condition \eqref{conditC} for any $Q$ satisfying \eqref{eq:moment} (Theorem~\ref{theo:method}).

In the following sections we show how the trigonometric moment problem can be translated in terms
of the monodromy group of $P$ and then, using the program GAP, show how all transitive permutation groups of
degree $30$ or less satisfy either condition \eqref{B*} or \eqref{C*} except for a relatively small number of
exceptional groups.  We then study three of the four simplest such groups in more detail (the other
one has been studied by Pakovich, Pech and Zvonkin in the paper~\cite{Pakovichex}).
We obtain corresponding Laurent polynomials, which are of degree 9, 10 and 16, respectively,
and prove the existence of solutions not satisfying conditions \eqref{conditB} or \eqref{conditC}.

In the last section we consider the case of real trigonometric polynomials, which is the most important
case for applications. 
We show that in this case the monodromy group has additional structure and compute
via GAP that all examples up to degree $30$ fall into cases \eqref{conditB} or \eqref{conditC}.  We give
a proof that this is also true whenever $P$ has prime degree, but were unable to prove the general result,
which we leave as a conjecture.

Restating Theorem~\ref{theorem2} in terms of real trigonometric 
polynomials, we obtain the following result.

\begin{theo}

Assume that $P,Q$ are a trigonometric polynomials with real
coefficients up to degree 15. Then \eqref{eq:trig-moment} hold if and only if either 
\begin{enumerate}
 \item There exist a  
polynomial $\tilde P$, and an integer $l>1$, such that 
$P(\theta)=\tilde P(\sin(l\theta),\cos(l \theta))$.
In this case the terms in the Fourier expansion
of $Q$ in $\cos(k\theta),\sin(k\theta)$ with 
$l\nmid k$ make no contribution to \eqref{eq:trig-moment}.
Writting the remaining terms in $Q$ as $\tilde Q(\sin(l\theta),\cos(l\theta))$, 
\begin{equation}\label{eq:condred}
\int_0^{2\pi} \tilde P^i(\sin\theta,\cos\theta)\,d\tilde Q(\sin\theta,\cos\theta)=0. 
\end{equation}
\item There exist polynomials $\tilde P_k$ and $\tilde Q_k$ and trigonometric polynomials $W_k$
such that $P(\theta)=\tilde P_k(W_k(\theta))$ and $Q(\theta)=\sum_k \tilde Q_k(W_k(\theta))$.   
\end{enumerate}
In case (1), condition \eqref{eq:condred} is of strictly smaller degree
than \eqref{eq:trig-moment}. So we can solve the Trigonometric
Moment Problem iterating this theorem.
\end{theo}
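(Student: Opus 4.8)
The plan is to derive this statement as a direct translation of the already-proven complex result, Theorem~\ref{theorem2}, back into the language of real trigonometric polynomials via the substitution $z = e^{i\theta}$. First I would recall the dictionary between the two settings: a real trigonometric polynomial $P(\theta)$ of degree $d$ corresponds to a Laurent polynomial $P(z) \in \mathcal{L}$ of degree $(d,d)$ satisfying the reality condition $\overline{P(1/\bar z)} = P(z)$; the integral $\int_0^{2\pi} P^k\,dQ$ becomes the contour integral $\oint_{|z|=1} P^k(z)\,dQ(z)$, so \eqref{eq:trig-moment} is exactly condition \eqref{eq:moment}. The degree-$15$ hypothesis on $P$ translates into a transitive monodromy group of degree at most $30$ (the Laurent polynomial has degree $30$ as a rational map), which is precisely the range in which we have verified via GAP that every monodromy group satisfies \eqref{B*} or \eqref{C*} in the real case. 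Hence Theorem~\ref{theorem2} applies and tells us that \eqref{eq:moment} holds if and only if $Q$ is obtained by (iterated) application of conditions \eqref{conditB} and \eqref{conditC}.

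Next I would unpack what \eqref{conditB} and \eqref{conditC} say in the real picture. For \eqref{conditB}: the factorization $P(z) = \tilde P(z^l)$ of the Laurent polynomial corresponds, after writing $z = e^{i\theta}$ so that $z^l = e^{il\theta}$, to $P(\theta) = \tilde P(\sin l\theta, \cos l\theta)$ for a polynomial $\tilde P$ in two variables; the decomposition $Q(z) = \tilde Q(z^l) + \sum_{l\nmid i} a_i z^i$ says exactly that the Fourier modes $\cos k\theta, \sin k\theta$ of $Q$ with $l \nmid k$ are unconstrained, while the remaining modes assemble into $\tilde Q(\sin l\theta, \cos l\theta)$; and the reduced moment condition \eqref{eq:laurent-moment-red} is \eqref{eq:condred}. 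One must check that the reality condition is inherited by $\tilde P$ and $\tilde Q$ — this follows since $z \mapsto z^l$ commutes with $z \mapsto 1/\bar z$ in the relevant sense — and that $\tilde P, \tilde Q$ have integer degree, which they do because $P$ has degree divisible by $l$ once the $z^l$-factorization holds. For \eqref{conditC}: the factorization $P = P_k \circ W_k$ with $P_k \in \mathbb{C}[z]$ and $W_k \in \mathcal{L}$ becomes $P(\theta) = \tilde P_k(W_k(\theta))$ with $\tilde P_k$ an ordinary polynomial and $W_k$ a trigonometric polynomial, and $Q = \sum_k Q_k \circ W_k$ becomes $Q(\theta) = \sum_k \tilde Q_k(W_k(\theta))$. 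Here I would note that although the complex factorizations \emph{a priori} involve complex polynomials, one can choose them real: given a real $P$, any rational decomposition can be averaged with its conjugate (or one appeals to the fact, used already in the proof of Theorem~\ref{theorem2} in the real case, that the monodromy-group conditions \eqref{B*}, \eqref{C*} produce decompositions compatible with the extra real structure).

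Finally, the last sentence — that \eqref{eq:condred} has strictly smaller degree than \eqref{eq:trig-moment}, so the theorem can be iterated — is immediate: $\tilde P$ has degree $d/l \le d/2 < d$, and the same reality/degree dictionary shows $\tilde P$ is again a genuine real trigonometric polynomial, so one re-enters the hypotheses (still with degree well under $15$) and may reapply the statement; the recursion terminates because the degree strictly decreases. I would organize the write-up as: (i) state the translation $z = e^{i\theta}$ and verify the two problems coincide; (ii) invoke Theorem~\ref{theorem2}; (iii) transcribe \eqref{conditB} and \eqref{conditC} into (1) and (2), checking reality and integrality of the data produced; (iv) observe the degree drop. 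The main obstacle — and the only step requiring genuine care rather than bookkeeping — is (iii), specifically ensuring that the polynomials and trigonometric polynomials furnished by the complex theorem can be taken to respect the reality structure and to have the integer degrees claimed; everything else is a routine change of variables already implicit in the passage from \eqref{eq:trig-moment} to \eqref{eq:moment} at the start of the paper.
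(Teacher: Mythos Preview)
Your proposal is correct and follows essentially the same route as the paper: the theorem is explicitly presented there as a restatement of Theorem~\ref{theorem2}, and the translation details you outline---the $z=e^{i\theta}$ dictionary, the identification of \eqref{conditB} with case~(1) and \eqref{conditC} with case~(2), and the verification that the resulting $\tilde P$, $W_k$, $\tilde P_k$ can be taken real---are exactly the content of the discussion in Section~\ref{tpwrc} surrounding Theorem~\ref{theorem2}. The paper's argument for reality in case~(2) is slightly more concrete than your ``average with the conjugate'' suggestion (it compares leading terms to force $\overline{W}(1/z)=W(z)$ directly), but the structure of your write-up matches the paper's.
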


%

\section{Preliminaries: Decompositions of $P$ and blocks of $G_P$}


Let us denote by $\mathcal{L}$ the ring of Laurent polynomials
with complex coefficients.
We shall consider $P\in\mathcal{L}$,
\[
P(z)=\sum_{k=-m}^{n} a_k z^k,
\]
where $a_{-m}$ and $a_n$ are non zero and $n,m\geq 1$ (if $n=0$ or $m=0$, then
the problem is solved in~\cite{Prat}). 

A short and neatly simple proof of the next result can be found at page 25 of \cite{Prat}.
For more details see \cite{PakovichDec} or \cite[Lemma~2.1]{Zieve}.

\begin{lema}\label{lema:decompositions}
If $P=g\circ h$, where $P\in\mathcal{L}\backslash(\mathbb{C}[z]\cup \mathbb{C}[z^{-1}])$ and $g,h\in\mathbb{C}(z)$,
then there is a degree-one $\mu\in\mathbb{C}(z)$ such that $G:=g\circ \mu$
and $H:=\mu^{-1}\circ h$ satisfy one of the following cases:
\begin{enumerate}
 \item $G\in\mathbb{C}[z]$ and $H\in\mathcal{L}$.
 \item $G\in \mathcal{L}$ and $H=z^k$ for some $k\in\mathbb{N}$.
\end{enumerate}
\end{lema}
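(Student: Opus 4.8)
The plan is to argue geometrically, regarding $P$, $g$, $h$ as holomorphic self-maps of the Riemann sphere $\mathbb{P}^1$ and reading off the normalization from the fibre of $P$ over $\infty$. The starting point is a dictionary translating membership in $\mathcal{L}$ into the location of poles: a rational function lies in $\mathcal{L}$ exactly when all of its poles lie in $\{0,\infty\}$; it lies in $\mathbb{C}[z]$ exactly when its only possible pole is $\infty$; and it equals $cz^{k}$ (a constant times a power) exactly when it has a single zero, at $0$, and a single pole, at $\infty$. The hypothesis $P\in\mathcal{L}\setminus(\mathbb{C}[z]\cup\mathbb{C}[z^{-1}])$ says precisely that the polar set of $P$ is $P^{-1}(\infty)=\{0,\infty\}$, with both points genuinely present. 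With this dictionary in hand, the two alternatives in the conclusion will emerge from the two possible sizes of the set of poles of $g$.

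First I would locate the poles of $g$. Put $S=g^{-1}(\infty)$, which is nonempty since $g$ is nonconstant. From $P=g\circ h$ we get
\[
\{0,\infty\}=P^{-1}(\infty)=h^{-1}(S)=\bigsqcup_{s\in S}h^{-1}(s),
\]
a disjoint union of nonempty sets indexed by $S$. A two-point set admits no partition into three or more nonempty pieces, so $|S|\le 2$, and the proof divides into two cases. If $|S|=1$, say $S=\{p\}$, I would take a degree-one $\mu$ with $\mu(\infty)=p$. Then $G=g\circ\mu$ has poles $\mu^{-1}(S)=\{\infty\}$, so $G\in\mathbb{C}[z]$, while $H=\mu^{-1}\circ h$ has poles $H^{-1}(\infty)=h^{-1}(\mu(\infty))=h^{-1}(p)=\{0,\infty\}$, so $H\in\mathcal{L}$; this is case (1).

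In the remaining case $|S|=2$, say $S=\{p_1,p_2\}$, the two nonempty fibres $h^{-1}(p_1)$ and $h^{-1}(p_2)$ must partition the two-point set $\{0,\infty\}$ into singletons, so $h$ is totally ramified, of index $d=\deg h$, over each $p_i$; in particular $h(0)$ and $h(\infty)$ are the two distinct points $p_1,p_2$. I would then choose a degree-one $\mu$ with $\mu(0)=h(0)$ and $\mu(\infty)=h(\infty)$. For $H=\mu^{-1}\circ h$ this forces $H^{-1}(0)=h^{-1}(h(0))=\{0\}$ and $H^{-1}(\infty)=h^{-1}(h(\infty))=\{\infty\}$, each with full multiplicity $d$ and with no other zeros or poles, so by the dictionary $H=cz^{d}$ for some constant $c$; meanwhile $G=g\circ\mu$ has poles $\mu^{-1}(S)=\{0,\infty\}$, so $G\in\mathcal{L}$. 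The one point that needs care---the main obstacle---is clearing the constant $c$ to reach the stated form $H=z^{k}$. Here I would use the residual freedom in $\mu$: the conditions $\mu(0)=h(0)$, $\mu(\infty)=h(\infty)$ pin down $\mu$ only up to precomposition with a scaling $z\mapsto\alpha z$, which replaces $H=cz^{d}$ by $\alpha^{-1}c\,z^{d}$; taking $\alpha=c$ yields $H=z^{d}$, and since the pole set $\mu^{-1}(S)=\{0,\infty\}$ is unchanged by the scaling we still have $G\in\mathcal{L}$, giving case (2) with $k=d$. The only analytic input beyond fibre bookkeeping is the identification of a degree-$d$ map with a single $d$-fold zero and a single $d$-fold pole as $cz^{d}$, which is immediate from comparison of divisors (equivalently, from Riemann--Hurwitz, since the two total ramifications already exhaust the $2d-2$ units of ramification available on $\mathbb{P}^1$, leaving $h$ unramified over $\mathbb{C}^{*}$ and hence a power map).
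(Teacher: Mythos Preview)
Your argument is correct and complete. The paper does not actually supply its own proof of this lemma; it merely cites \cite{Prat}, \cite{PakovichDec}, and \cite{Zieve}. Your fibre-over-$\infty$ argument is the standard route taken in those references: read off the pole set $P^{-1}(\infty)=\{0,\infty\}$, use surjectivity of $h$ to bound $|g^{-1}(\infty)|\le 2$, and normalise with a M\"obius $\mu$ in each case. The only point worth making explicit is that $g$ and $h$ are automatically nonconstant (since $P$ is), which you use implicitly when asserting that each fibre $h^{-1}(s)$ is nonempty; otherwise nothing is missing.
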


\begin{rema}
The decompositions of the Laurent polynomial $P(z)$ induce
decompositions on the trigonometric polynomial $P(e^{iz})$. Namely,
if $P(z)=\tilde P(z^m)$, with $\tilde P\in\mathcal{L}$, then
$P(e^{iz})$ is a trigonometric polynomial in $\sin(mz)$ and $\cos(mz)$.
If $P(z)=\tilde P(W(z))$, with $\tilde P\in\mathbb{C}[z]$ and $W\in\mathcal{L}$,
then $P(e^{iz})=\tilde P(W(e^{iz}))$, thus, it is the composition of
a polynomial and a trigonometric polynomial.  We show in Section~\ref{tpwrc} that
if we start with a real trigonometric polynomial, the decomposition can also be chosen
to be expressible as the composition of a real polynomial with a real trigonometric polynomial.
\end{rema}

Let $f \in \mathbb{C}(z)$ be a rational function. We say that $z_0 \in \mathbb{C}$ is a {\em critical point} of $f$ if $f'(z_0)=0$, and the value $f(z_0)$ is called a {\em critical value} of $f$. We will denote by $\Sigma$ the set of all critical values of $f$. For $t \in \mathbb{C} \setminus \Sigma$, the set $f^{-1}(t)$ consists of
$m$ different points $z_i(t)$, $i=1,\ldots, m$. By the implicit
function theorem one can push locally each solution $z_i(t)$ to
nearby values of $t$ thus defining multi-valued analytic functions
$z_i(t)$, $t\in\mathbb{C}\setminus\Sigma$. Each loop based at
$t_0 \in \mathbb{C}\setminus\Sigma$ defines thus a permutation of
the $m$ roots $z_1(t_0),\ldots, z_m(t_0)$ of $f(z)=t_0$. These
permutations depend only on the homotopy class of the loop and
form a group $G_f$ called the \emph{monodromy group} of $f$, which is
transitive on the fiber $f^{-1}(t)$ (see, e.g., \cite{Z}). Moreover,
 $G_f$ is the Galois group of the Galois extension of $\mathbb{C}(t)$
by the $m$ pre-images $z_1(t), \ldots, z_m(t)$ of $t \in \mathbb{C} \setminus \Sigma$ by $f$, that is,
\[
G_f = Aut(\mathbb{C}(z_1, \ldots, z_m)/\mathbb{C}(t)).
\]
We recall that an extension $k \to L$ is said to be {\em Galois}
if the degree of the extension coincides with the order of its
automorphism group $G=Aut(L/k) = Aut_{k-alg}(L,L)$:
\[
[ L : k] = | G |.
\]
The automorphism group of a Galois extension is called the {\em
Galois group} of the extension and it is usually denoted by
$Gal(L/k)$, although we will use any of the previous notations
when no confusion can arise.
Any permutation $\sigma \in G_f$ is said to have {\em cycle shape $(d_1)(d_2)\ldots(d_r)$} if $\sigma$ is the product of $r$ disjoint cycles $\sigma_1, \sigma_2, \ldots, \sigma_r$ with respective lengths $d_1 \geq d_2 \geq \ldots \geq d_r$.

Let $P \in \mathcal{L}$ be a Laurent polynomial 
and let $G_P= Gal (L/\mathbb{C}(t))$ be its monodromy group, where $L = \mathbb{C}(z_1(t),\ldots,z_{n+m}(t)))$ and $z_1,\ldots,z_{n+m}$ are the branches of $P^{-1}$. We shall number the branches of $P^{-1}$ such that $\sigma_\infty=(1, 2, \ldots, n)(n+1, n+2, \ldots, n+m)$ is a permutation corresponding to a clockwise loop around infinity.

A useful way to represent the monodromy group $G_P$ of $P$ is the so called ``dessins d'enfants''.
Take $\Sigma=\{t_1,t_2,\ldots,t_r\}$, and let $t_0$ be a non-critical value. Consider
the graph obtained by joining each of the critical values to $t_0$ with non-intersecting paths. Now take the pre-image of that ``star''.
\begin{figure}[h]
\begin{center}
\resizebox{10cm}{!}{\includegraphics{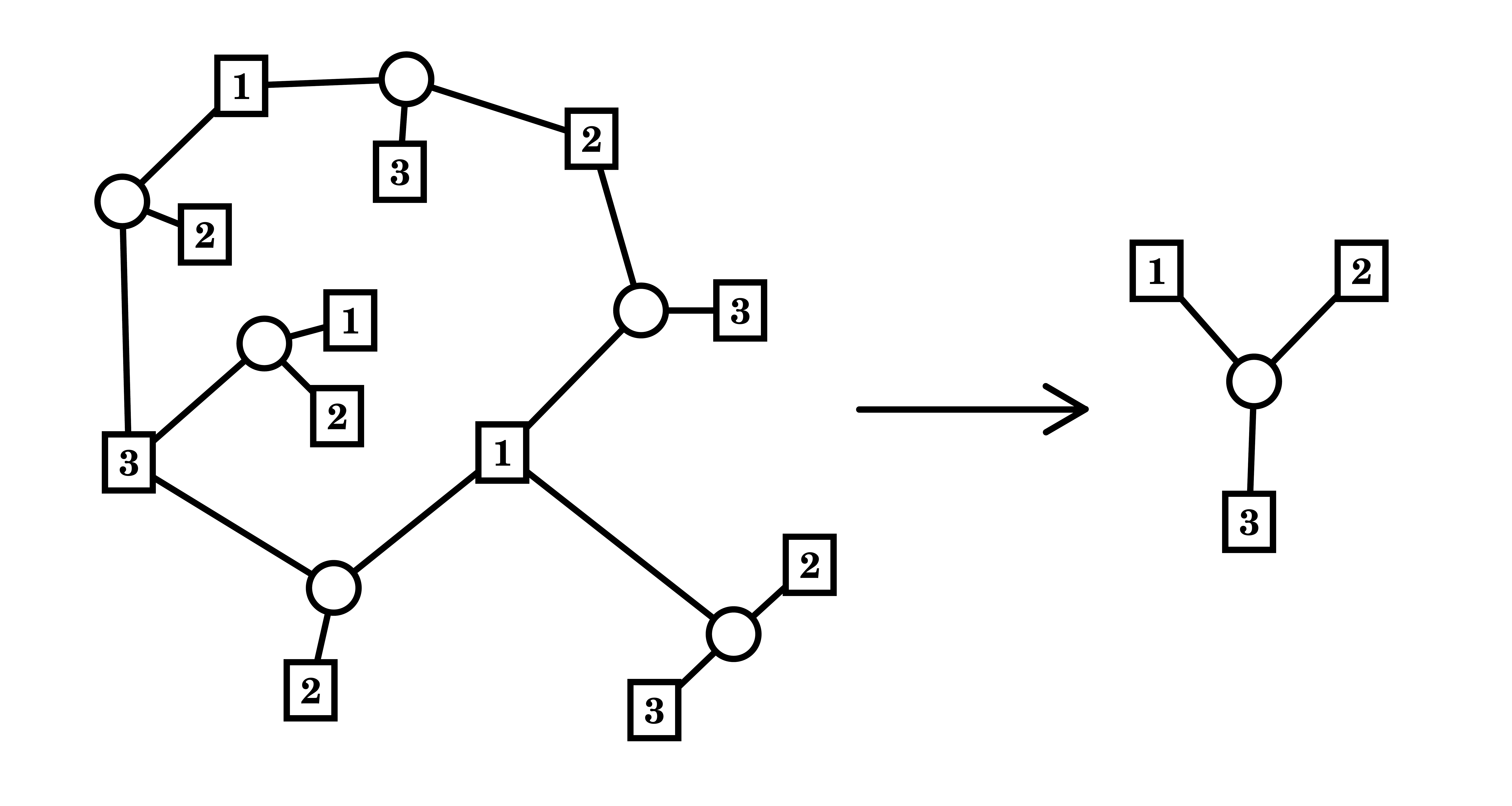}}
\caption{``Dessin d'enfants''} \label{fig:0}
\end{center}
\end{figure}
In Figure~\ref{fig:0}
we have denoted by circles the pre-images of the non-critical value, which can be identified with the branches, and by squares with a number the pre-images of the corresponding critical value. The permutation associated to a simple loop around one of the critical values can be obtained simply by rotating the branches connected with each pre-image of that critical value. This makes clear the relation between a critical value and a permutation associated to a simple loop around it: the critical value has associated critical points $w_1,w_2,\ldots,w_r$ with multiplicity $n_1,n_2,\ldots,n_r$ if and only if the cycle has cycle shape $(n_1)(n_2)\ldots(n_r)$.  In Figure~\ref{fig:0}, the cycles associated to the critical values $1$, $2$ and $3$ have shapes $(3)(2)$, $(2)$ and $(3)$ respectively.

The decompositions of a rational function induce imprimitivity systems in the monodromy group  and vice-versa. Let us describe this in more detail.
A subset $B$ of $X=\{1,2,\ldots,n+m\}$ is called a {\em block}~(\cite{Wielandt}) of $G_P \subseteq S_{n+m}$ if for each $\sigma\in G_P$, either $\sigma(B)=B$ or $\sigma(B)\cap B=\varnothing$. Therefore, for a block $B$, $\mathcal{B}=\{\sigma(B)\colon \sigma\in G_P\}$ is a partition of $X$, which is called an {\em imprimitivity system} of $G_P$. There are always two {\em trivial} imprimitivity systems in $G_P$: the one generated by the block $B=\{1\}$, and the one generated by the block $B=X$.  Let $G_{P,1}$ denote the subgroup of $G_P$ which fixes the element $1$, then the non-trivial block systems of $G_P$ are in one to one correspondence with the subgroups $H$ of $G_P$ strictly lying between $G_{P,1}$ and $G_P$.
Under this correspondence, the subgroups $H$ are exactly the groups which stabilize the block which contains $1$ in the imprimitivity system.

The non-trivial imprimitivity systems for a Laurent polynomial are described in the following result.

\begin{lema}\label{lema:blocks}
Suppose that $n,m\neq 0$, and let $\mathcal{B}$ be a non trivial imprimitivity system of $G_P$.
Then one of the following cases holds:
\begin{enumerate}
\item[(a)] Every block $B\in\mathcal{B}$ is included in or disjoint with $\{1,\ldots,n\}$. In this case, there exists $d|gcd(n,m)$ such that $\mathcal{B}$ coincides with the congruence classes modulo $n/d$ in $\{1,\ldots,n\}$, and with the congruence classes modulo $m/d$ in $\{n+1,\ldots,n+m\}$.

\item[(b)] Every block $B\in\mathcal{B}$ satisfies $B\cap\{1,\ldots ,n\}\neq \varnothing$,
$B\cap\{n+1,\ldots,n+m\}\neq \varnothing$. In this case, there exist $d|gcd(n,m)$, $0\leq r<d$, $k_B \in \mathbb{Z}$ such that
\[
B = \{i \in \{1,\ldots,n\} \colon i \equiv_d k_B \} \cup \{ i \in \{n+1,\ldots,n+m\} \colon i \equiv_d k_B+r \},
\]
that is, such that $B \cap \{1, \ldots, n\}$ is a congruence class modulo $d$ in $\{1, \ldots, n\}$, and $B \cap \{n+1, \ldots, n+m\}$ is a congruence class modulo $d$ in $\{n+1, \ldots, n+m\}$.
\end{enumerate}
\end{lema}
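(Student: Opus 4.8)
The plan is to exploit the specific element $\sigma_\infty=(1,2,\ldots,n)(n+1,\ldots,n+m)$ which we have fixed in the monodromy group, together with the fact that $\mathcal{B}$ is $G_P$-invariant, hence in particular $\sigma_\infty$-invariant. First I would record the elementary fact about block systems that all blocks have the same cardinality, say $b$, so $b\mid (n+m)$, and that $\sigma_\infty$ permutes the blocks of $\mathcal{B}$. Since $\sigma_\infty$ has exactly two cycles, on $\{1,\ldots,n\}$ and on $\{n+1,\ldots,n+m\}$ respectively, the induced action of $\langle\sigma_\infty\rangle$ on the set $\mathcal{B}$ of blocks has at most two orbits; I would split into the two cases of the statement according to whether the block containing $1$ is entirely inside $\{1,\ldots,n\}$ or meets both halves.

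In case (a), suppose the block $B_0\ni 1$ satisfies $B_0\subseteq\{1,\ldots,n\}$. Because $\langle\sigma_\infty\rangle$ restricted to $\{1,\ldots,n\}$ is a single $n$-cycle, the orbit of $B_0$ under $\langle\sigma_\infty\rangle$ must tile $\{1,\ldots,n\}$; a cyclic group acting on a single cycle of length $n$ has exactly one block system of each size $n/d$ with $d\mid n$, namely the congruence classes modulo $n/d$. So $B_0$ is a congruence class mod $n/d$ for some $d\mid n$, and likewise, looking at the block containing $n+1$ (which by the case hypothesis is also contained in $\{n+1,\ldots,n+m\}$), the blocks meeting the second half are congruence classes modulo $m/d'$ for some $d'\mid m$. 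The remaining point is to show $d=d'$: this follows because all blocks have the same size $b=n/d=m/d'$, forcing $d=\gcd$-compatibility, i.e. $n/b$ and $m/b$ are integers and $b\mid\gcd(n,m)$, so writing $d=\gcd(n,m)/(\text{common factor})$ gives $d\mid\gcd(n,m)$ as claimed. In case (b), the block $B_0\ni 1$ meets both halves; then $\langle\sigma_\infty\rangle$ acts transitively on all of $\mathcal{B}$ (one orbit), the intersections $B\cap\{1,\ldots,n\}$ form a $\langle\sigma_\infty|_{\{1,\ldots,n\}}\rangle$-block system on the $n$-cycle, hence congruence classes mod $d$ for $d=|B\cap\{1,\ldots,n\}|^{-1}n$... more precisely $B\cap\{1,\ldots,n\}$ has some common size and is a congruence class mod $d$ where $d=n/|B\cap\{1,\ldots,n\}|$; similarly on the second half with the same $d$ since the number of blocks is $d$ in both descriptions; and the shift $r$ records the offset between how $\sigma_\infty$ synchronises the two congruence-class labellings, with $0\le r<d$ after normalising.

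The main obstacle I expect is case (b): ensuring that the modulus is genuinely the same $d$ on both halves and pinning down the offset $r$ correctly and canonically. The cleanest way is probably to argue via the subgroup correspondence mentioned just before the lemma — non-trivial block systems correspond to groups $H$ with $G_{P,1}\lneq H\lneq G_P$ — but for the concrete description it seems more direct to track $\sigma_\infty$ as above: since $\sigma_\infty$ sends block to block and within case (b) has a single orbit on $\mathcal{B}$, if $B_0\cap\{1,\ldots,n\}$ contains $i$ and $i'$ with $i\equiv i' \pmod{d}$ then applying suitable powers of $\sigma_\infty$ and using invariance shows every congruence class mod $d$ in $\{1,\ldots,n\}$ arises as $B\cap\{1,\ldots,n\}$ for some $B\in\mathcal{B}$, and $d$ must equal the number of blocks; the identical argument on $\{n+1,\ldots,n+m\}$ gives the same $d$, and $r:=k_{B_0}'-k_{B_0}$ where $B_0\cap\{n+1,\ldots,n+m\}$ is the class $\equiv k_{B_0}'$; reducing $r$ mod $d$ and noting $d\mid n$, $d\mid m$ (so $d\mid\gcd(n,m)$) finishes it. Throughout, the only structural input needed is transitivity of $G_P$ together with the presence of the two-cycle element $\sigma_\infty$, which is exactly what the numbering convention supplies.
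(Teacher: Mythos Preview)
Your proposal is correct and follows essentially the same route as the paper: both arguments pivot on the single element $\sigma_\infty=(1,\ldots,n)(n+1,\ldots,n+m)$, use its powers to move a given block around each cycle, and conclude that the intersections of blocks with $\{1,\ldots,n\}$ and with $\{n+1,\ldots,n+m\}$ are congruence classes of the appropriate modulus, with equal block size forcing the same $d$ on both halves. The paper's proof is in fact terser than yours (it does not spell out the offset $r$ or the ``same $d$'' step explicitly), so your write-up, once the bookkeeping about what $d$ denotes (block size versus number of blocks) is tidied, would serve as a more detailed version of the same argument.
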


\begin{proof}
(a) Let $B\in\mathcal{B}$ be such that $B\subset\{1,\ldots,n\}$, the other case being analogous. Let $d=|B|$. Then, $\sigma_\infty^i(B)$ is also a block for $1 \leq i \leq n$, and $ \{ \sigma_\infty^i(B) \}_{1 \leq i \leq n}$ is a partition of $\{1,\ldots,n\}$ into subsets of the same cardinality. As a consequence, $\mathcal{B}$ restricted to $\{1,\ldots,n\}$ consists of the congruence classes modulo $n/d$.

Similarly, we can find $\bar B\in\mathcal{B}$ such that $\bar B\subset\{n+1,\ldots,n+m\}$, and repeating the arguments, one obtains that $\mathcal{B}$ restricted to $\{n+1,\ldots,n+m\}$ is a congruence class modulo $m/d$.

(b) If $B \in\mathcal{B}$ verifies that $B\cap\{1,\ldots ,n\}\neq \varnothing$, $B\cap\{n+1,\ldots,n+m\}\neq \varnothing$, then $\{ \sigma_\infty^i(B) \}_{1 \leq i \leq n}$ define one  partition in $\{1, \ldots,n\}$ and another (related) in $\{n+1,\ldots, n+m\}$ of the same cardinality. Let $d=|\{1,\ldots,n\}|/ |B \cap \{1,\ldots,n\}| = | \{ n+1,\ldots, n+m\}|/|B \cap \{n+1,\ldots,n+m\} |$. Then each $B\in\mathcal{B}$ belongs to a congruence class modulo $d$ in $\{1,\ldots,n\}$ and a congruence class modulo $d$ in $\{n+1,\ldots,n+m\}$.
\end{proof}


Given a rational function $f(z)$ of degree $m$ and $\Sigma$ its set of critical values, we say that two {\em decompositions} $ \tilde f \circ h$ and  $ \tilde f' \circ h'$ of $f$ are {\em equivalent} if $h$ and $h'$ define the same field over every pre-image of $t \in \mathbb{C} \setminus \Sigma$ by $f(z)$. That is, if $\mathbb{C}(h(z_i))=\mathbb{C}(h'(z_i))$ for every $i=1,\ldots,m$. There exists a one-to-one correspondence between imprimitivity systems of $G_f$ and equivalence classes of decompositions of $f$ (see \cite{ABM}).  Moreover, for Laurent polynomials we have the following result.

\begin{coro}\label{coro:blocks}
Let $P(z)$ be a Laurent polynomial. There exists a one-to-one correspondence between imprimitivity systems of $G_P$ and decompositions of $P$. More precisely, non-trivial imprimitivity systems are in a one-to-one correspondence with 
Laurent polynomials $W$ such that $P = \tilde P \circ W$ for some $\tilde P \in \mathcal{L}$ up to equivalence by a M\"obius transformation. 
\end{coro}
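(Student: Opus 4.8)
The plan is to deduce the statement from the general correspondence between imprimitivity systems of $G_f$ and equivalence classes of decompositions (cited from \cite{ABM}) together with the structural description of the blocks given in Lemma~\ref{lema:blocks} and the normal form of Lemma~\ref{lema:decompositions}. First I would recall that, by the general theory, every non-trivial imprimitivity system $\mathcal{B}$ of $G_P$ corresponds to an intermediate field $\mathbb{C}(t)\subsetneq \mathbb{C}(W(z_i))\subsetneq \mathbb{C}(z_i)$, and hence to a decomposition $P=\tilde P\circ W$ with $\deg W>1$ and $\deg\tilde P>1$, determined up to equivalence; two decompositions are equivalent precisely when they induce the same system $\mathcal{B}$. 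The content to be added is therefore only that in the Laurent case one may choose the ``inner'' factor $W$ to be again a Laurent polynomial (rather than merely a rational function), and that this choice is unique up to post-composition of $W$ with a M\"obius transformation.

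The second step is to apply Lemma~\ref{lema:decompositions} to a representative decomposition $P=g\circ h$ of the given equivalence class: there is a degree-one $\mu\in\mathbb{C}(z)$ so that, replacing $g,h$ by $g\circ\mu,\ \mu^{-1}\circ h$, we land in case (1), $G\in\mathbb{C}[z]$, $H\in\mathcal{L}$, or in case (2), $G\in\mathcal{L}$, $H=z^k$. In case (1) we already have $W:=H\in\mathcal{L}$, and in case (2) the inner factor $W:=z^k$ is trivially in $\mathcal{L}$; so in both cases a Laurent-polynomial inner factor exists. I would then match these two analytic cases against the two combinatorial cases of Lemma~\ref{lema:blocks}: case (b) of Lemma~\ref{lema:blocks}, where every block meets both $\{1,\dots,n\}$ and $\{n+1,\dots,n+m\}$, corresponds to the ``polynomial outer factor'' situation $P=\tilde P(W)$ with $\tilde P\in\mathbb{C}[z]$, while case (a), where the blocks refine the partition $\{1,\dots,n\}\sqcup\{n+1,\dots,n+m\}$, corresponds to $W=z^{d}\cdot(\text{unit})$ up to M\"obius equivalence, i.e. to factoring out a power of $z$. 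Concretely, one checks that the fixed field of the block containing a chosen branch is generated over $\mathbb{C}(t)$ by $W$ in each case, using the description of how $\sigma_\infty$ acts on $\{1,\dots,n\}$ and $\{n+1,\dots,n+m\}$ from the proof of Lemma~\ref{lema:blocks}.

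For the uniqueness-up-to-M\"obius statement: if $P=\tilde P\circ W=\tilde P'\circ W'$ with $W,W'\in\mathcal{L}$ inducing the same imprimitivity system, then by the \cite{ABM} correspondence $\mathbb{C}(W(z_i))=\mathbb{C}(W'(z_i))$ for all $i$, hence $W'=\nu\circ W$ for some degree-one $\nu\in\mathbb{C}(z)$; the point is that $\nu$ is forced to be a genuine M\"obius transformation (automorphism of $\mathbb{P}^1$), which is automatic since any degree-one rational function is M\"obius. Conversely post-composing $W$ with a M\"obius map does not change the induced field, hence not the block system, giving the bijection. The main obstacle I anticipate is the bookkeeping in the middle step: verifying carefully that the normal form of Lemma~\ref{lema:decompositions} applied to an \emph{arbitrary} representative of the equivalence class still yields a class whose block system is exactly $\mathcal{B}$ (i.e. that conjugating $h$ by $\mu$ does not alter $\mathbb{C}(h(z_i))$), and pinning down the precise M\"obius normalization in case (a) so that $W$ is literally a scalar multiple of $z^{d}$. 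Everything else is a direct translation through the Galois dictionary already set up in the text.
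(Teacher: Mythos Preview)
Your proposal is correct and follows essentially the same Galois-correspondence route as the paper: associate to a block system the stabilizer of the block containing $1$, take its fixed field, obtain a generator $W$, and read off the decomposition $P=\tilde P\circ W$; the converse and the M\"obius ambiguity are handled identically. Two differences are worth noting. First, the paper does the forward direction self-contained via L\"uroth's theorem rather than quoting \cite{ABM} as a black box. Second, your middle step---matching the two normal forms of Lemma~\ref{lema:decompositions} against the two block types of Lemma~\ref{lema:blocks}---is not needed for this corollary at all; that case analysis is precisely the content of the \emph{next} result, Lemma~\ref{le:blocks}, and the paper's proof of the corollary makes no use of Lemma~\ref{lema:blocks}. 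On the other hand, your explicit appeal to Lemma~\ref{lema:decompositions} to normalize the rational $W$ produced by L\"uroth into a Laurent polynomial is a point the paper's own proof glosses over, so on that detail your version is the more complete one.
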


\begin{proof}
Under the Galois correspondence, $\mathbb{C}(t)$ is the fixed field of $G_P$ and $\mathbb{C}(z_1)$ is the fixed field of the group $G_{P,1}$.  For any imprimitivity system $\mathcal{B}$ of $G_P$, we let $B \in \mathcal{B}$ denote the block containing the element $1$.  The group which fixes this block generates a fixed field $L_{B} \subseteq \mathbb{C}(z_1)$, which by L\"{u}roth's theorem~\cite{W} is generated by a rational function $W \in \mathbb{C}(z_1)$.  Since $\mathbb{C}(t)\subset \mathbb{C}(W)$, we must have $P(z_1) = t = \tilde{P}(W(z_1))$ for some rational function $\tilde{P}$ so that $P = \tilde{P}\circ W$.  Clearly, distinct imprimitivity systems give distinct fixed fields and hence distinct factorizations.
Any other generator of $\mathbb{C}(W(z_1))$ must be of the form $W_1 = \mu \circ W$, for some M\"obius transformation $\mu$.

Conversely, given a non-trivial decomposition $P = \tilde{P}\circ W$ we can form a chain of vector fields $\mathbb{C}(t)\varsubsetneq \mathbb{C}(W(z_1))\varsubsetneq \mathbb{C}(z_1)$, by the Galois correspondence this gives us a subgroup of $G_P$ lying strictly between $G_{P,1}$ and $G_P$, and hence gives rise to an imprimitivity system.
\end{proof}

Moreover, we can determine the type of decomposition.



\begin{lema}\label{le:blocks}
Let $P$ be a Laurent polynomial.
%
The blocks of an imprimitivity system of $G_P$ are contained in or disjoint with $\{1,\ldots,n\}$ if and only if the corresponding $W$ of the decomposition of $P$ is of the form $W(z)=z^k$.
\end{lema}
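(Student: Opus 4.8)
The plan is to use the description of imprimitivity systems already set up in Lemma~\ref{lema:blocks} together with the Galois correspondence of Corollary~\ref{coro:blocks}. First I would observe that the hypothesis ``every block is contained in or disjoint with $\{1,\ldots,n\}$'' is precisely case (a) of Lemma~\ref{lema:blocks}; in particular this forces $\{1,\ldots,n\}$ to be a union of blocks and likewise $\{n+1,\ldots,n+m\}$, so the partition restricted to $\{1,\ldots,n\}$ consists of the congruence classes modulo $n/d$ and restricted to $\{n+1,\ldots,n+m\}$ of the congruence classes modulo $m/d$, where $d\mid\gcd(n,m)$. (The boundary cases $d = n$ or $d = m$, where one of the two halves is a single block, should be noted but cause no trouble.) The reverse direction is the easy computational check: if $W(z)=z^k$ then $P=\tilde P(z^k)$ with $\tilde P\in\mathcal L$, and the fibre of $P$ over a generic $t$ splits as the $k$-fold union of the $z$-multiples of the fibre of $\tilde P$, so the monodromy blocks are exactly the preimages under $z\mapsto z^k$ of the branches of $\tilde P$ — these manifestly sit inside $\{1,\ldots,n\}$ or inside $\{n+1,\ldots,n+m\}$ according to the numbering convention $\sigma_\infty = (1\,\cdots\,n)(n{+}1\,\cdots\,n{+}m)$.

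For the forward direction the key step is to identify the rational function $W$ attached to the block system by Corollary~\ref{coro:blocks} and show it is (up to a M\"obius transformation, which we may absorb) a power map. I would argue as follows. Let $B$ be the block containing the branch labelled $1$; since $1\in\{1,\ldots,n\}$ we have $B\subseteq\{1,\ldots,n\}$, and $\sigma_\infty$ acts on $\{1,\ldots,n\}$ as an $n$-cycle permuting the $d$ blocks inside $\{1,\ldots,n\}$ cyclically with $B$ of size $n/d$... wait, let me keep the notation of Lemma~\ref{lema:blocks}(a): the block containing $1$ has some size, call it $s$, with $s\mid n$ and also (the analogous block in the other half has size) $s\mid m$ — more precisely $\mathcal B$ restricted to $\{1,\ldots,n\}$ is the congruence classes modulo $n/s$. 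Writing $k:=n/s=m/s'$... the cleanest route is: the subgroup $H$ stabilising $B$ contains $\sigma_\infty^{\,n/s}$, and $W$ generates the fixed field of $H$ inside $\mathbb C(z_1)$. Since $P=\tilde P\circ W$ and $P\notin\mathbb C[z]\cup\mathbb C[z^{-1}]$, Lemma~\ref{lema:decompositions} applies to this decomposition: after a degree-one change we are in case (1), $W\in\mathcal L$, or case (2), $W=z^k$. I then need to rule out case (1) under the block hypothesis. In case (1) of Lemma~\ref{lema:decompositions}, $\tilde P\in\mathbb C[z]$, so $\tilde P$ has a single pole (at the point $W=\infty$, i.e. where $W$ has its pole); that means the poles of $P$ — which are $0$ and $\infty$ — both map under $W$ to the unique pole of $\tilde P$, so $W$ takes the same value at $z=0$ and $z=\infty$. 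Translating into branch labels: the branch of $P^{-1}$ near $z=0$ and the branch near $z=\infty$ lie in fibres of $W$ that get identified, hence in the same block; but the branch near $0$ sits in $\{n+1,\ldots,n+m\}$ and the one near $\infty$ in $\{1,\ldots,n\}$ (or vice versa, depending on the numbering), so that block meets both halves — contradicting the hypothesis. Hence we are in case (2), $W=z^k$, and since the system is non-trivial $k>1$.

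The main obstacle I anticipate is making the link between ``which branch of $P^{-1}$ lies in which half of $X$'' and ``the behaviour of $W$ near the two poles $0,\infty$ of $P$'' fully rigorous — i.e. being careful about the limiting/analytic-continuation argument that identifies the block of the branch through a given pole of $P$ with a fibre of $W$ over a pole of $\tilde P$. One clean way to do this is to work directly at the level of places of the function fields: $\mathbb C(t)\subset\mathbb C(W)\subset\mathbb C(z)$ (writing $z=z_1$), the place $t=\infty$ of $\mathbb C(t)$ has exactly two places above it in $\mathbb C(z)$, namely $z=0$ and $z=\infty$ (ramification $m$ and $n$), and these correspond to the two $\sigma_\infty$-cycles, i.e. to the two halves of $X$; the block structure over $t=\infty$ is governed by how these two places distribute among the places of $\mathbb C(W)$ over $t=\infty$. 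The hypothesis says no block meets both halves, i.e. $z=0$ and $z=\infty$ lie over \emph{distinct} places of $\mathbb C(W)$, i.e. $W(0)\neq W(\infty)$ in $\mathbb P^1$. If $W$ were a Laurent polynomial (case (1)) its unique pole as a function on $\mathbb P^1$ would have to be $0$ or $\infty$ and its unique zero of that same order likewise, but $W$ having poles only at $\{0,\infty\}$ and being injective on generic fibres structure... actually the sharp statement is just: $W$ has at most two places over $W=\infty$; if both $z=0$ and $z=\infty$ are poles of $P$ then under case (1), where $\tilde P$ is a genuine polynomial with only $\infty$ as a pole, both must be poles of $W$, forcing $W(0)=W(\infty)=\infty$, the contradiction. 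Once this place-theoretic translation is in hand the rest is bookkeeping with Lemma~\ref{lema:blocks}, so I would spend the bulk of the write-up there and keep the converse direction to a sentence.
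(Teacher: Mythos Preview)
Your proposal is correct and follows essentially the same route as the paper's proof: both directions invoke Lemma~\ref{lema:decompositions} to reduce to the dichotomy $W\in\mathcal L$ versus $W=z^k$, and both rule out the first alternative by observing that if $\tilde P$ is a genuine polynomial then its single pole at $\infty$ forces the two poles $z=0,\,z=\infty$ of $P$ to lie over the same place of $\mathbb C(W)$, so the corresponding block meets both halves of $X$. The paper phrases this last step in terms of limits of the branches $\tilde z_j(t)$ as $t\to\infty$, while you recast it in the language of places of the tower $\mathbb C(t)\subset\mathbb C(W)\subset\mathbb C(z)$; this is a cosmetic difference and, as you note, makes the link between ``branch in $\{1,\ldots,n\}$'' and ``pole at $z=\infty$'' slightly more transparent.
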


\begin{proof}

If $P = \tilde{P}(z^k)$, where $\tilde{P}$ is a Laurent polynomial, then we can find roots $\tilde{z}_j(t)$ of $\tilde{P}=t$.
The roots $P(z)=t$ are just the solutions of $z^k = \tilde{z}_j$ for some $j$, and clearly the roots associated to each $\tilde{z}_j(t)$
form an imprimitivity system.  Furthermore, if $\tilde{z}_j(t)$ tends to zero as $t$ tends to infinity then the same is true for all elements of
the block, and similarly if $\tilde{z}_j(t)$ tends to infinity as $t$ tends to infinity.  Hence the blocks are either contained or disjoint from
$\{1,\ldots,n\}$.

Conversely, suppose we have such a block system and let $P(z)=\tilde{P}(W(z))$ denote the corresponding decomposition.  By construction, the blocks of the system correspond to roots of $W(z) = \tilde{z}_j(t)$, where $\tilde{z}_j(t)$ are the roots of $\tilde{P}(z)=t$.  By Lemma~\ref{lema:decompositions}, if we cannot choose $W(z) = z^k$ for some $k$ after composition with a M\"obius transformation, then $\tilde{P}$ must be a polynomial and $W(z)$ a non-trivial Laurent polynomial.  Hence, as $t$ tends to infinity, all the $\tilde{z}_j$ will tend to infinity, and therefore the roots of $W(z)=\tilde{z}_j(t)$ will include both those which tend to zero and those which tend to infinity as $t$ tends to infinity, which contradicts the assumption on the blocks.
\end{proof}

In particular, if $\mathbb{B}$ denotes the set of all imprimitivity systems of $G_P$, then using Lemma~\ref{lema:blocks} we have $P = \tilde{P}(z^l)$, $l>1$, if and only if the following condition holds:
\begin{equation}\tag{$B^*$}\label{B*}
  \exists\, \mathcal{B} \in \mathbb{B} \quad \exists\, B \in \mathcal{B}, \qquad \{1\}\varsubsetneq B \subseteq \{1,\ldots,n\}.
\end{equation}


%


\section{Sufficiency of the weak composition condition}

In this section we study how decompositions of the
Laurent polynomials produce solutions of \eqref{eq:moment}.
Firstly we transform the problem into a more algebraic one.

\begin{prop}\label{prop:equiv}
Let $P,Q\in\mathcal{L}$. The following conditions are equivalent:
\begin{enumerate}
 \item $\oint_{|z|=1} P^k(z)\,dQ(z)=0$ for every $k\geq 0$.
 \item $\oint_{|z|=1} \dfrac{dQ(z)}{P(z)-t}\equiv 0$ for every $t\in\mathbb{C}$.
 \item $\oint_{|z|=1} \dfrac{Q(z)dP(z)}{P(z)-t}\equiv 0$ for every $t\in\mathbb{C}$.
 \item Take $t$ close to infinity and number the pre-images $z_i(t)$ of $P(z)=t$ such that
$\{z_{k}(t)\}_{k=1,\ldots,n}$ (resp. $\{z_{k}(t)\}_{k=n+1,\ldots,n+m}$) are the points close to infinity (resp. zero). Then
\begin{equation}\label{eq:cycle}
\sum_{k=1}^n m Q(z_{k}(t))-\sum_{k=n+1}^{n+m} n Q(z_{k}(t))\equiv 0 \quad\text{ for every }t\in\mathbb{C}.
\end{equation}
\end{enumerate}
\end{prop}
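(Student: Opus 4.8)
The plan is to prove the equivalences in the order $(1)\Leftrightarrow(2)\Leftrightarrow(3)\Leftrightarrow(4)$: the first three are ``moment-type'' restatements of one another, while the last one brings in the preimage structure of $P$. For $(1)\Leftrightarrow(2)$: when $|t|>\max_{|z|=1}|P(z)|$ the geometric series $\tfrac{1}{t-P(z)}=\sum_{k\ge0}P(z)^{k}t^{-k-1}$ converges uniformly on $|z|=1$, so integrating against $dQ$ term by term gives $-\oint_{|z|=1}\tfrac{dQ(z)}{P(z)-t}=\sum_{k\ge0}t^{-k-1}\oint_{|z|=1}P^{k}(z)\,dQ(z)$, and the left-hand side vanishes for all large $t$ exactly when every moment vanishes. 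Here I would also note that each integral in (2) and (3) is holomorphic in $t$ off the compact curve $\{P(z):|z|=1\}$, so ``vanishing near $\infty$'' is the substantive content of ``$\equiv 0$ for every $t$'' and propagates along this complement.

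For $(2)\Leftrightarrow(3)$: since $Q/(P-t)$ is a single-valued rational function of $z$ on the contour, $\oint_{|z|=1}d\!\left(\tfrac{Q}{P-t}\right)=0$; expanding the differential and using $\partial_t\tfrac{1}{P-t}=\tfrac{1}{(P-t)^{2}}$ yields $\oint_{|z|=1}\tfrac{dQ}{P-t}=\tfrac{d}{dt}\oint_{|z|=1}\tfrac{Q\,dP}{P-t}$. The integral $\oint_{|z|=1}\tfrac{Q\,dP}{P-t}$ tends to $0$ as $t\to\infty$, so it vanishes identically iff its $t$-derivative does, i.e.\ iff (2) holds.

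For $(3)\Leftrightarrow(4)$ I would work with $\tfrac{Q\,dP}{P-t}=Q(z)\,d\log(P(z)-t)$. From $(P(z)-t)z^{m}=a_{n}\prod_{k=1}^{n+m}(z-z_{k}(t))$ one gets $d\log(P-t)=\sum_{k}\tfrac{dz}{z-z_{k}}-m\tfrac{dz}{z}$, a form with a simple pole of residue $1$ at each $z_{k}(t)$ and residues $-m$, $-n$ at the poles $z=0$, $z=\infty$ of $P$; hence $Q\,d\log(P-t)$ has residue $Q(z_{k}(t))$ at $z_{k}(t)$. For $t$ near $\infty$ the $m$ preimages $z_{n+1},\dots,z_{n+m}$ lie in $|z|<1$ and $z_{1},\dots,z_{n}$ lie in $|z|>1$; collapsing $|z|=1$ inward onto these $m$ preimages together with $z=0$ gives $\oint_{|z|=1}\tfrac{Q\,dP}{P-t}=2\pi i\bigl(\sum_{k>n}Q(z_{k})+\rho_{0}(t)\bigr)$ with $\rho_{0}(t)=\operatorname{Res}_{z=0}\bigl(Q\,d\log(P-t)\bigr)$, and collapsing outward onto the other $n$ preimages together with $z=\infty$ gives $\oint_{|z|=1}\tfrac{Q\,dP}{P-t}=-2\pi i\bigl(\sum_{k\le n}Q(z_{k})+\rho_{\infty}(t)\bigr)$. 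Taking the weighted combination of these two identities (weights $n$ and $m$) eliminates the full trace $\sum_{k}Q(z_{k})$ and rewrites the condition $\oint_{|z|=1}\tfrac{Q\,dP}{P-t}\equiv 0$ as $m\sum_{k\le n}Q(z_{k})-n\sum_{k>n}Q(z_{k})\equiv 0$, which is (4) — up to the residue terms $n\rho_{0}-m\rho_{\infty}$.

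That last point is the step I expect to be the main obstacle: the two ``extra'' poles of $P$ at $0$ and $\infty$ each contribute, and one must check that $n\rho_{0}-m\rho_{\infty}$ vanishes precisely when the moments do. Concretely, writing $Q=Q^{+}+Q^{-}$ (polynomial part and principal part at $0$) one computes $\rho_{0}=-m\,b_{0}-\operatorname{Tr}_{P}(Q^{-})$ and $\rho_{\infty}=m\,b_{0}-\operatorname{Tr}_{P}(Q^{+})$, where $b_{0}$ is the constant term of $Q$ and $\operatorname{Tr}_{P}(R)(t)=\sum_{P(z)=t}R(z)$; both $\rho_{0},\rho_{\infty}$ are polynomials in $t$. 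Since $\oint_{|z|=1}\tfrac{Q\,dP}{P-t}\to 0$ at $\infty$ while this combination is a polynomial, the direction $(4)\Rightarrow(3)$ is then immediate; for $(3)\Rightarrow(4)$ I would match the expansions at $t=\infty$ of the partial traces $\sum_{k>n}Q(z_{k})$ and $\sum_{k\le n}Q(z_{k})$ — power series in $t^{-1/m}$, resp.\ $t^{1/n}$, whose only nonzero exponents are integral powers of $t$ — against the vanishing of the moments to conclude $n\rho_{0}-m\rho_{\infty}\equiv 0$. An alternative I would keep in reserve is to bypass (3) and obtain $(1)\Leftrightarrow(4)$ directly, using that $\oint_{|z|=1}P^{k}\,dQ$ (its integrand being a Laurent polynomial in $z$) can be evaluated on an arbitrarily small circle around $0$, hence equals $2\pi i$ times a single coefficient of the local expansion of the partial trace $\sum_{k>n}Q(z_{k})$.
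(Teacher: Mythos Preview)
Your treatment of $(1)\Leftrightarrow(2)$, $(2)\Leftrightarrow(3)$ and $(4)\Rightarrow(3)$ is correct and close to the paper's. For $(2)\Leftrightarrow(3)$ you use the primitive relation $\oint\frac{dQ}{P-t}=\frac{d}{dt}\oint\frac{Q\,dP}{P-t}$, whereas the paper simply compares the Taylor coefficients of the two integrals at $t=\infty$ (they agree up to a harmless shift); both work. For $(4)\Rightarrow(3)$ you collapse the contour both inward and outward to obtain the two expressions $I/(2\pi i)=\sum_{k>n}Q(z_k)+\rho_0=-\sum_{k\le n}Q(z_k)-\rho_\infty$, combine them, and observe that $\rho_0,\rho_\infty$ are polynomials in $t$; the paper does essentially the same, using only the ``inward'' identity together with the full trace $\sum_k Q(z_k)=-\rho_0-\rho_\infty$.

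The genuine gap is in $(3)\Rightarrow(4)$. Your weighted combination correctly reduces the implication to showing $n\rho_0(t)-m\rho_\infty(t)\equiv 0$ under the hypothesis $I\equiv 0$, but ``matching expansions against the vanishing of the moments'' does not do this. In your own notation $n\rho_0-m\rho_\infty=-m(n+m)b_0+m\,\mathrm{Tr}_P(Q^+)-n\,\mathrm{Tr}_P(Q^-)$; this is a fixed polynomial in $t$ depending only on $P$ and $Q$, and the moment conditions (which are exactly the coefficients of the $O(1/t)$-part of $\sum_{k>n}Q(z_k)$) do not visibly constrain it. Equivalently, you need $\sum_{k>n}Q(z_k)=\frac{m}{n+m}\mathrm{Tr}_P(Q)$, and nothing in your expansion argument forces this.

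The paper supplies exactly the missing idea: once $I\equiv 0$, the identity $\sum_{k>n}Q(z_k)=-\rho_0(t)$ shows that the partial trace is a \emph{single-valued} function of $t$, hence invariant under analytic continuation around every critical value, i.e.\ invariant under the monodromy group $G_P$. Since $G_P$ acts transitively on the $n+m$ branches, averaging $\sum_{k>n}Q(z_{\sigma(k)})$ over $\sigma\in G_P$ gives $\sum_{k>n}Q(z_k)=\frac{m}{n+m}\sum_{k=1}^{n+m}Q(z_k)$, and $(4)$ follows immediately. This transitivity argument (or an equivalent Galois-theoretic statement) is the step you are missing; your ``alternative in reserve'' does not supply it either.
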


\begin{rema}
In the notation of Gavrilov and Movasati~\cite{GM}, Proposition~\ref{prop:equiv} states that for
any given $P$, then $Q$ is a solution of the Laurent moment problem if and only if
the zero-dimensional abelian integral of $Q$ is identically null along the zero-dimensional cycle
\[
C(t)=\sum_{k=1}^n m Q(z_{k}(t))-\sum_{k=n+1}^{n+m} n Q(z_{k}(t)).
\]
This problem has been recently study for simple cycles ($z_i(t)-z_j(t)$) in \cite{C} and
more generally, for polynomial $P$, in \cite{ABM}.

Note that also in~\cite{Pakovich},~\cite{Pakovichex},~\cite{Prat} the problem is transformed into
the vanishing of a zero-dimensional abelian integral of $Q$ for a certain cycle. This cycle is defined
in terms of the ``dessins d'enfants'', so the explicit computation for
a given polynomial group is not direct. 
\end{rema}

\begin{proof}
Firstly, since $\int_{|z|=1} P^k(z)\,dQ(z)$ are the coefficients of
the Taylor series of \[\oint_{|z|=1} \dfrac{dQ(z)}{P(z)-t}\] at infinity, then
$(1)$ and $(2)$ are equivalent. Moreover, the coefficients of the Taylor series of
\[\oint_{|z|=1} \dfrac{Q(z)dP(z)}{P(z)-t}\]
at infinity are the same except for the first one, which being just $\oint dQ$, is trivially satisfied.
Thus $(3)$ is also equivalent to $(1)$ and $(2)$.

Now, let us prove that $(3)$ and $(4)$ are equivalent.

Take $t$ close to infinity. By the Residue Theorem,
\begin{equation*}
\oint_{|z|=1} \frac{Q(z)dP(z)}{P(z)-t} =2\pi i\left(\sum_{k=n+1}^{n+m} Q(z_{k}(t))-Res(QP'/(t-P),0))\right).
\end{equation*}

Assume that $(3)$ holds. For any $\sigma\in G_P$, by analytic continuation,
\[
\sum_{k=n+1}^{n+m} Q(z_{k}(t))-\sum_{k=n+1}^{n+m} Q(z_{\sigma(k)}(t))=0,
\]
where $\sigma(k)$ just expresses the action of $G_P$ on the roots $z_i$.
If we sum the previous formula over $G_P$, we obtain
\begin{equation}\label{eq:nomonodromy}
\sum_{k=n+1}^{n+m} |G_P| Q(z_{k}(t))-\sum_{k=1}^{n+m}\frac{|G_P|m}{n+m} Q(z_{k}(t))=0.
\end{equation}
Dividing by $|G_P|$ and multiplying by $n+m$ gives
\[
\sum_{k=n+1}^{n+m} n Q(z_{k}(t))-\sum_{k=1}^{n} m Q(z_{k}(t))=0.
\]

Conversely, assume \eqref{eq:cycle} holds (or, equivalently, \eqref{eq:nomonodromy}).
Integrating around a simple cycle containing
no critical point,
\[
\sum_{k=1}^{n+m} Q(z_{k}(t))= Res(QP'/(t-P),0)+ Res(QP'/(t-P),\infty).
\]
Therefore
\[
\begin{split}
\oint_{|z|=1}& \frac{Q(z)dP(z)}{P(z)-t}= 2\pi i\left(\frac{m}{n+m}\sum_{k=1}^{n+m} Q(z_{k}(t))- Res(QP'/(t-P),0))\right)\\
=&2\pi i\left( \frac{m}{n+m} Res(QP'/(t-P),\infty))-\frac{n}{n+m} Res(QP'/(t-P),0))\right).
\end{split}
\]
Since
\[
\begin{split}
\frac{1}{P(z)-t}&=\frac{z^m}{a_{-m}+\sum_{k=1-m}^n a_k z^{k+m}-tz^m}\\
&=\frac{z^m}{a_{-m}}\sum_{l\geq 0}
\left(\frac1{a_{-m}}\sum_{k=1-m}^n tz^m - a_k z^{k+m} \right)^l,
\end{split}
\]
then $Res(QP'/(P-t),0))$ is a polynomial in $t$. Using the change of
variables, $z\to z^{-1}$, $Res(QP'/(P-t),\infty))$ is also a polynomial in $t$.
But as $t$ tends to infinity,
the integral $\oint_{|z|=1} \frac{Q(z)dP(z)}{P(z)-t}$ tends to zero, so it is zero.
\end{proof}

Now we study solutions of \eqref{eq:moment} when $P$ is decomposable.
The first case is when $P$ is a composition with inner term a power of $z$. We
show that in that case the problem can be reduced to one of lower degree.

\begin{prop}\label{prop:zm}
Suppose that $P(z)=\tilde{P}(z^l)$, $l\in\mathbb{N}$, $\tilde{P}\in \mathcal{L}$, and let us write $Q(z)=\tilde{Q}(z^l)+R(z)$, where $R(z)$ contains all the terms of $Q(z)$ which are not divisible by $z^l$. Then $Q$ satisfies  \eqref{eq:moment} if and only if
\begin{equation*}
\oint_{|z|=1} \tilde{P}^n(z)\,d\tilde{Q}(z)=0,\quad n=0,1,2,\ldots
\end{equation*}
\end{prop}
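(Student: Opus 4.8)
The plan is to use the characterization in Proposition~\ref{prop:equiv}, particularly the equivalence of condition (1) with condition (2), namely that $Q$ satisfies \eqref{eq:moment} if and only if $\oint_{|z|=1} dQ(z)/(P(z)-t) \equiv 0$ for all $t$. Writing $P(z)=\tilde P(z^l)$ and $Q(z)=\tilde Q(z^l)+R(z)$ where $R$ collects the monomials of $Q$ whose exponents are not divisible by $l$, I would split the integral as $\oint dQ/(P-t) = \oint d(\tilde Q(z^l))/(\tilde P(z^l)-t) + \oint dR(z)/(\tilde P(z^l)-t)$ and analyze the two pieces separately.

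For the first piece, I would make the substitution $w=z^l$. As $z$ runs once around $|z|=1$, $w$ runs $l$ times around $|w|=1$, so $\oint_{|z|=1} d(\tilde Q(z^l))/(\tilde P(z^l)-t) = l \oint_{|w|=1} d\tilde Q(w)/(\tilde P(w)-t)$. Hence this term vanishes for all $t$ precisely when $\oint_{|w|=1} d\tilde Q(w)/(\tilde P(w)-t)\equiv 0$, which by Proposition~\ref{prop:equiv} (applied to $\tilde P,\tilde Q$) is equivalent to $\oint_{|z|=1}\tilde P^n(z)\,d\tilde Q(z)=0$ for all $n$. So it remains to show that the second piece, $\oint_{|z|=1} dR(z)/(\tilde P(z^l)-t)$, vanishes identically in $t$ regardless of $R$ (as long as $R$ has no terms divisible by $z^l$), which would complete both directions at once.

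For the second piece, the key observation is the $\mathbb{Z}/l$-symmetry: if $\zeta$ is a primitive $l$-th root of unity, then $P(\zeta z)=\tilde P((\zeta z)^l)=\tilde P(z^l)=P(z)$, so $1/(P(z)-t)$ is invariant under $z\mapsto \zeta z$. Averaging the integrand $R'(z)/(P(z)-t)$ over the rotations $z\mapsto \zeta^j z$, $j=0,\ldots,l-1$, and using that the contour $|z|=1$ is invariant under these rotations, one gets $\oint_{|z|=1} R'(z)\,dz/(P(z)-t) = \tfrac1l \oint_{|z|=1} \big(\sum_{j} \zeta^j R'(\zeta^j z)\big)\,dz/(P(z)-t)$. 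Now $\sum_{j=0}^{l-1}\zeta^j R'(\zeta^j z)$ is exactly the part of $R'(z)$ coming from monomials $z^{k-1}$ with $k\equiv 0 \pmod l$; since by construction $R$ contains no monomial with exponent divisible by $l$, this sum is zero, and hence the second piece vanishes identically. (Equivalently, one can argue directly: $d(z^i)=iz^{i-1}dz$ with $i\not\equiv 0\pmod l$, expand $1/(P(z)-t)$ and check that no residue at $0$ survives — but the symmetry argument is cleaner.)

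The main obstacle, such as it is, is bookkeeping with the substitution $w=z^l$: one must be careful about the winding number (factor of $l$) and about the fact that the $l$ branches over a given $w$ are $\zeta^j z$, so that the monodromy/residue structure of $P$ over $t$ is just $l$ copies of that of $\tilde P$ together with the extra rotational symmetry. Once the rotational-invariance of $1/(P-t)$ on $|z|=1$ is in hand, everything reduces to the elementary fact that averaging a Laurent polynomial's derivative over $\mathbb{Z}/l$ picks out exactly the $l\,|\,$exponent part, which is why the decomposition $Q=\tilde Q(z^l)+R$ is the natural one and why $R$ makes no contribution. I do not expect any genuine difficulty beyond making these steps precise.
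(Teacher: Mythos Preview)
Your argument is correct, but it takes a longer path than the paper's. Both proofs rest on the same splitting $Q=\tilde Q(z^l)+R$ and the same two claims: the $\tilde Q$-part reduces by the substitution $w=z^l$, and the $R$-part contributes nothing. The difference is in how the $R$-part is handled. You pass through Proposition~\ref{prop:equiv} to the generating function $\oint dR/(P-t)$ and then kill it via the $\mathbb{Z}/l$ rotational symmetry of $1/(P(z)-t)$. The paper instead works directly with the individual moments: since every monomial of $\tilde P^k(z^l)$ has exponent divisible by $l$ and every monomial of $R'(z)$ has exponent $\not\equiv -1 \pmod l$, the product $\tilde P^k(z^l)R'(z)$ contains no $z^{-1}$ term, so it has no simple pole at $0$ and the contour integral vanishes. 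This is exactly the residue alternative you mention parenthetically; the paper makes it the whole proof, which avoids invoking Proposition~\ref{prop:equiv} at all. Your symmetry argument is a pleasant conceptual repackaging of the same vanishing, but the direct residue check is shorter here.
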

\begin{rema}
Let us recall that $P(z)=\tilde{P}(z^l)$ if and only if
$B\subset\{1,\ldots,n\}$ for some block $B$ of some
imprimitivity system $\mathcal{B}$, thus, this problem can be reduced
to one of lower degree if condition \eqref{B*} holds.
\end{rema}

\begin{proof}
This follows easily by taking into account that
\[
\oint_{|z|=1} \tilde{P}^k(z^l)\,d R(z)=0,\quad k=0,1,\ldots
\]
as the function $\tilde{P}^k(z^l)R'(z)$ can have no poles of order $1$ in the region $|z|\leq 1$.
\end{proof}

The second case is when $P$ can be written as a composition with inner term a
Laurent polynomial. In this case, the weak composition condition is sufficient.

\begin{prop}\label{prop:laufactor}
Suppose that that $P(z)=P_k(W_k(z))$ with $W_k\in\mathcal{L}\backslash(\mathbb{C}[z]\cup \mathbb{C}[z^{-1}])$ for $k = 1,\ldots,l$.
Then $Q(z)=Q_1(W_1(z))+\ldots+Q_l(W_l(z))$ satisfies (\ref{eq:moment}) for all choices of polynomials $Q_k(z)$, $k = 1,\ldots,l$.
\end{prop}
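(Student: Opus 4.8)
The plan is to reduce everything to the characterization of solutions given in Proposition~\ref{prop:equiv}, in particular the cycle condition \eqref{eq:cycle}, and to show that it holds automatically whenever $Q$ has the shape $Q_1\circ W_1+\cdots+Q_l\circ W_l$. By linearity in $Q$ it suffices to treat a single summand, so I would set $Q(z)=Q_k(W_k(z))$ for a fixed $k$, write $P=P_k\circ W_k$ with $W_k\in\mathcal{L}\setminus(\mathbb{C}[z]\cup\mathbb{C}[z^{-1}])$ and $P_k\in\mathbb{C}[z]$, and drop the subscript $k$. The key point is that $P_k$ is an honest \emph{polynomial}, so that as $t\to\infty$ all $\deg P_k$ preimages $w_j(t)$ of $P_k(w)=t$ tend to $\infty$; meanwhile the $n+m$ preimages $z_i(t)$ of $P(z)=t$ are grouped according to which $w_j(t)$ they map to under $W$, i.e. $W(z_i(t))=w_{j(i)}(t)$, and the $n$ ``infinity'' branches (resp. the $m$ ``zero'' branches) of $P$ are exactly those $z_i$ for which $W(z_i)\to\infty$ (resp. $W(z_i)\to 0$), since $W$ is a genuine Laurent polynomial with a pole at both $0$ and $\infty$ whose degrees at those two points are $n/\deg P_k$ and $m/\deg P_k$ respectively.

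Next I would compute the two sums appearing in \eqref{eq:cycle}. For each fixed root $w_j(t)$ of $P_k=t$, the equation $W(z)=w_j(t)$ has some number $a$ of solutions near $\infty$ and some number $b$ near $0$, where $a,b$ are the pole orders of $W$ at $\infty$ and $0$; these are independent of $j$ because $w_j(t)\to\infty$ for every $j$. Since $Q=Q_k\circ W$ is constant equal to $Q_k(w_j(t))$ on the whole block of $z_i$'s lying over $w_j(t)$, the contribution of that block to $\sum_{i=1}^n mQ(z_i(t))$ is $m\,a\,Q_k(w_j(t))$ and its contribution to $\sum_{i=n+1}^{n+m} nQ(z_i(t))$ is $n\,b\,Q_k(w_j(t))$. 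Summing over $j=1,\dots,\deg P_k$ gives
\[
\sum_{i=1}^n mQ(z_i(t))-\sum_{i=n+1}^{n+m} nQ(z_i(t))=(ma-nb)\sum_{j}Q_k(w_j(t)).
\]
Now $n=a\deg P_k$ and $m=b\deg P_k$ (the pole orders multiply under composition), so $ma-nb=b\deg P_k\cdot a-a\deg P_k\cdot b=0$, and the cycle condition \eqref{eq:cycle} holds identically in $t$; by Proposition~\ref{prop:equiv} this is equivalent to \eqref{eq:moment}, which completes the argument for one summand, hence for the general $Q$ by summation.

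I expect the main obstacle to be purely bookkeeping: making precise the claim that the $n$ branches of $P^{-1}$ near $\infty$ are in bijection with pairs (branch $w_j$ of $P_k^{-1}$ near $\infty$, branch of $W^{-1}$ near $\infty$ over it), and likewise for the $m$ branches near $0$, together with the multiplicativity of the pole orders $n=a\deg P_k$, $m=b\deg P_k$. One clean way to organize this is to work with the Newton-polygon / Puiseux data of $W$ at $z=0$ and $z=\infty$, or simply to observe that as $t\to\infty$ the decomposition $P=P_k\circ W$ factors the covering near $\infty\in\mathbb{P}^1_t$ into the covering $w\mapsto P_k(w)$ (totally ramified-free away from finitely many values, all $\deg P_k$ sheets going to $\infty$) followed by $z\mapsto W(z)$ near each $w_j$. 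An alternative, slightly less geometric route avoiding even this: plug $Q=Q_k\circ W$ into condition (3) of Proposition~\ref{prop:equiv}, change variables $w=W(z)$ to get $\oint Q_k(w)\,P_k'(w)\,dw/(P_k(w)-t)$ over the image cycle $W(\{|z|=1\})$, and note this is a contour integral of a rational function of $w$ all of whose finite poles $w_j(t)$ lie far from the fixed compact contour once $t$ is large, so it is the sum of residues at the poles of $Q_kP_k'/(P_k-t)$ enclosed by $W(\{|z|=1\})$; expanding at $t=\infty$ shows each coefficient (a symmetric function of the $w_j$ weighted appropriately) is a polynomial in $t$ that vanishes at infinity, hence is zero. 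Either way the substance is the same elementary residue computation; I would present the geometric version since it makes transparent why the polynomial (as opposed to Laurent) nature of $P_k$ is exactly what forces the weighting $ma-nb$ to cancel.
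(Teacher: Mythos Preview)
Your argument via the cycle condition \eqref{eq:cycle} is correct: the bookkeeping you describe works exactly as you say, since $P_k$ being a genuine polynomial forces all $\deg P_k$ branches $w_j(t)$ to go to infinity, and the multiplicativity $n=a\deg P_k$, $m=b\deg P_k$ then makes $ma-nb$ vanish. So there is no gap.

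However, the paper's proof is considerably shorter and more direct. Rather than passing through Proposition~\ref{prop:equiv} at all, it simply performs the change of variable $u=W_k(z)$ in the original moment integral itself:
\[
\oint_{|z|=1} P^j(z)\,dQ_k(W_k(z))=\oint_{W_k(|z|=1)} P_k^j(u)\,dQ_k(u),
\]
and observes that since $P_k$ and $Q_k$ are both \emph{polynomials} (the former by Lemma~\ref{lema:decompositions}), the integrand $P_k^j(u)Q_k'(u)\,du$ is holomorphic on all of $\mathbb{C}$, so its integral over any closed curve vanishes. Your ``alternative route'' at the end is heading toward this idea but detours through condition~(3) and an unnecessary residue/large-$t$ analysis; applying the substitution directly to the moments (condition~(1)) makes the vanishing immediate. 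What your approach buys, on the other hand, is that it makes explicit the block structure of the roots under the decomposition $P=P_k\circ W_k$, which is exactly the picture used later in the proof of Theorem~\ref{theo:method}; so while it is overkill for this proposition, it is a good rehearsal for what follows.
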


\begin{proof}
By Lemma~\ref{lema:decompositions}, $P_i(z)$ must be a polynomial. Therefore, after the change of variable $u = W_i(z)$,
\[
\oint_{|z|=1} P^k(z)\,d Q_i(W_i)=\oint_{W^{-1}(|z|=1)} P_i^k(z)\,d Q_i(z)=0,\quad k=0,1,2,\ldots
\]
whence the result, since both $P_i$ and $Q_i$ are polynomials.
\end{proof}

\section{Main results}
In this section we obtain a condition \eqref{C*} on the monodromy group $G_P$ which guarantees that every solution of \eqref{eq:moment}
satisfies the weak composition condition \eqref{conditC} if condition \eqref{B*} does not hold.

Take $t_0$ close to infinity and number the branches such that $z_1(t_0)$, $\ldots$, $z_n(t_0)$
are points close to infinity and $z_{n+1}(t_0),\ldots,z_{n+m}(t_0)$ are points close
to zero, and moreover, such that the permutation corresponding to a simple clockwise
loop around infinity (not containing any other singular point) is $(1,2,\ldots,n)(n+1,n+2,\ldots,n+m)$.

Let us define $V$ as the vector space generated by the action of $G_P$ on the coefficients of
\eqref{eq:cycle}, thus,
\[
V=<\sigma(m,\overset{(n)}{\ldots},m,-n,\overset{(m)}{\ldots},-n)\colon \sigma\in G_P >.
\]

Since we are studying when $Q$ satisfies the weak composition condition,
we shall check how to describe compositions in terms of the monodromy group.
Note that if $B$ is a block of an imprimitivity system of the action of $G_P$, and $1\in B$, then
\[
R_B=\sum_{i\in B} Q(z_i(t))
\]
is a function invariant by the action of $G_B$, the stabilizer of $B$ by $G_P$.
Let $L_B$ be the fixed field of $G_B$, then $L_B = \mathbb{C}(W_B(z_1(t)))$ for some $W_B(z)\in\mathbb{C}(t)$ by L\"uroth's theorem.
Hence $R_B=Q_B(W_B(z_1(t)))$ for some $Q_B \in \mathbb{C}(z)$. Moreover, $Q_B\circ W_B\in \mathcal{L}$.  If condition \eqref{B*} does not hold,
$W$ cannot be equivalent to $z^n$, and hence $W_B$ and $Q_B$ can be chosen so that $W_B$ is a Laurent polynomial
and $Q_B$ is a polynomial.  Clearly $W_B$ also appears as a factor in a decomposition of $P = P_B\circ W_B$,
where $P_B$ must also be a polynomial.

For each block $B$ we define $w_B$ to be the vector with $(w_B)_i=1$ when $i\in B$ and $0$ otherwise.
We define $W$ to be the space generated by all vectors $w_B$ where $B$ runs over all blocks which contain the
element $1$ including the trivial block $B = \{1,\ldots,n+m\}$, but not $\{1\}$.

\begin{theo}\label{theo:method}
Let $P$ be a proper Laurent Polynomial ($P\not \in\mathbb{C}[z]$, $P\not \in\mathbb{C}[z^{-1}]$),
such that it does not admit a decomposition of the form $P(z)=\tilde{P}(z^l)$ for any $l>1$.
%
Assume that
\begin{equation}\tag{$C^*$}\label{C*}
(1,0,\ldots,0)\in V+W.
\end{equation}
Then (\ref{eq:moment}) holds if and only if there exist  $P_1,\ldots,P_r,Q_1,\ldots,Q_r\in\mathbb{C}[z]$, $W_1,\ldots,W_r\in\mathcal{L}\backslash(\mathbb{C}[z]\cup \mathbb{C}[z^{-1}])$ such that $P(z)=P_i(W_i(z))$,  $i=1,\ldots,r$, and $Q(z)=Q_1(W_1(z))+\ldots+Q_r(W_r(z))$.
\end{theo}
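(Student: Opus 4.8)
The plan is to prove both implications of Theorem~\ref{theo:method}, the non-trivial one being that condition \eqref{C*} forces every solution $Q$ of \eqref{eq:moment} to satisfy the weak composition condition \eqref{conditC}. The converse direction is immediate: if $Q = Q_1\circ W_1 + \dots + Q_r\circ W_r$ with $P = P_i\circ W_i$, then by linearity of \eqref{eq:moment} in $Q$ and Proposition~\ref{prop:laufactor} (noting that $P\ne\tilde P(z^l)$ guarantees, via Lemma~\ref{lema:decompositions}, that each $W_i$ can be taken a genuine Laurent polynomial and each $P_i$ a polynomial), each summand $Q_i\circ W_i$ contributes zero, hence so does $Q$.

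For the forward direction I would set up the following linear-algebra dictionary. By Proposition~\ref{prop:equiv}(4), $Q$ solves \eqref{eq:moment} if and only if the function $t\mapsto \sum_{k=1}^{n} m\,Q(z_k(t)) - \sum_{k=n+1}^{n+m} n\,Q(z_k(t))$ vanishes identically; equivalently, writing $c=(m,\dots,m,-n,\dots,-n)$ for the coefficient vector, the function $\langle c, (Q(z_1(t)),\dots,Q(z_{n+m}(t)))\rangle$ is identically $0$. Since the $z_i(t)$ are the branches of $P^{-1}$, the tuple $(Q(z_1(t)),\dots,Q(z_{n+m}(t)))$ transforms under $G_P$ by permuting coordinates, and its components are algebraic functions that are $\mathbb{C}(t)$-linearly independent as soon as $Q$ is non-constant along enough branches. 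The key point is a duality: the vector space $V$ generated by the $G_P$-orbit of $c$ is exactly the annihilator of the subspace of vectors $v$ such that $\langle v, (Q(z_1(t)),\dots)\rangle \equiv 0$ \emph{for every} solution $Q$; more precisely, I would argue that $V^\perp$ is spanned by the ``block-constant'' vectors, i.e. by the $w_B$ for blocks $B$ (this is essentially the content that sums over blocks $R_B = \sum_{i\in B} Q(z_i(t))$ descend to $Q_B\circ W_B$, and conversely that any $\mathbb{C}(t)$-linear relation among the $Q(z_i(t))$ valid for all $Q$ must come from the block structure, via the Galois correspondence of Corollary~\ref{coro:blocks} and the transitivity of $G_P$). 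Granting this, $W = (V^\perp)\cap\{\text{block vectors through }1\}$-span, and the hypothesis $(1,0,\dots,0)\in V+W$ says precisely that, modulo $V$, the standard basis vector $e_1$ lies in the span of the block vectors $w_B$ with $1\in B$.

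Now I would exploit this as follows. Fix a solution $Q$. For each block $B$ with $1\in B$ we have $R_B = \sum_{i\in B} Q(z_i(t)) = Q_B(W_B(z_1(t)))$ with $Q_B\in\mathbb{C}[z]$, $W_B\in\mathcal{L}$, and $P = P_B\circ W_B$. Writing $e_1 = v + \sum_B \lambda_B w_B$ with $v\in V$, pair both sides with the branch-tuple: $\langle e_1, (Q(z_i(t)))\rangle = Q(z_1(t))$; $\langle v, (Q(z_i(t)))\rangle = 0$ because $v\in V$ and $Q$ is a solution (here one uses that every element of $V$ is a $\mathbb{C}$-combination of $G_P$-translates of $c$, and each translate pairs to a $G_P$-translate of the identically-zero cycle sum); and $\langle w_B, (Q(z_i(t)))\rangle = R_B = Q_B(W_B(z_1(t)))$. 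Therefore $Q(z_1(t)) = \sum_B \lambda_B\, Q_B(W_B(z_1(t)))$ as functions of $t$, and since $z_1(t)$ is a non-constant algebraic function whose image is open, this is an identity of Laurent polynomials $Q(z) = \sum_B \lambda_B\, Q_B(W_B(z))$. Absorbing the scalars $\lambda_B$ into the $Q_B$ and relabelling gives exactly the weak composition condition \eqref{conditC}.

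The main obstacle — and the step I would spend the most care on — is the duality claim that $V^\perp$ is spanned by the block vectors, or at least the one-sided inclusion that every vector orthogonal to all solution-tuples is a combination of block vectors; this is what legitimizes identifying the obstruction to composition with the linear-algebraic condition \eqref{C*}. Proving it requires knowing that, apart from the relations forced by the imprimitivity systems (the $R_B$ being $G_B$-invariant) and the single global relation coming from the residue at $0$ and $\infty$ encoded in $c$, there are no further $\mathbb{C}(t)$-linear dependencies among the $Q(z_i(t))$ valid for all $Q\in\mathcal{L}$; I would derive this from the Galois-theoretic description $G_P = \mathrm{Aut}(\mathbb{C}(z_1,\dots,z_{n+m})/\mathbb{C}(t))$ together with the explicit block classification of Lemma~\ref{lema:blocks} (so that ``block vectors through $1$'' really do span the full space of $G_{P,1}$-invariant relations), and from linear independence of characters / the normal basis theorem applied to the branch fields. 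A secondary, more routine point to check is that the descent from an identity among the algebraic functions $z_i(t)$ to an identity of Laurent polynomials in $z$ is valid, which follows since $P:\{z_1(t)\}\to\mathbb{C}$ is dominant, and that the $W_B$, $Q_B$ can indeed be chosen in $\mathcal{L}$ and $\mathbb{C}[z]$ respectively — this is where the standing hypothesis that $P$ is not of the form $\tilde P(z^l)$ is used, via Lemma~\ref{lema:decompositions} and Lemma~\ref{le:blocks}.
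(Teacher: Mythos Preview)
Your third paragraph is exactly the paper's proof, and it is complete as it stands: from Proposition~\ref{prop:equiv}(4) the pairing of $c$ with the branch-tuple vanishes; analytic continuation by any $\sigma\in G_P$ gives the same for $\sigma(c)$; linearity extends this to all of $V$; the block sums $R_B$ factor through $W_B$ by the Galois argument preceding the theorem; and then the hypothesis $e_1\in V+W$ yields $Q(z_1(t))=\sum_B\lambda_B\,Q_B(W_B(z_1(t)))$, hence the same identity in $z$.

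The ``duality claim'' you flag in paragraphs~2 and~4 --- that $V^\perp$ is spanned by block vectors --- is \emph{not} part of the argument and you should drop it. Condition~\eqref{C*} is a hypothesis of the theorem, not a conclusion, so there is nothing to ``legitimize'': you are handed $e_1=v+\sum\lambda_Bw_B$ and simply use it. The duality statement, besides being unnecessary, is in general false (there is no reason why every vector annihilating all solution-tuples should come from blocks; the space $V$ is typically much smaller than the annihilator of the span of block vectors), and attempting to prove it would lead you astray. The only facts you need about $V$ and $W$ are the one-sided ones you already stated correctly: vectors in $V$ pair to zero against any solution's branch-tuple, and vectors $w_B$ pair to $Q_B\circ W_B$ evaluated at $z_1(t)$.
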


\begin{proof}

If there exist $P_1,\ldots,P_r,Q_1,\ldots,Q_r\in\mathbb{C}[z]$, $W_1,\ldots,W_r\in\mathcal{L}\backslash(\mathbb{C}[z]\cup \mathbb{C}[z^{-1}])$ such that $P(z)=P_i(W_i(z))$,  $i=1,\ldots,r$, and $Q(z)=Q_1(W_1(z))+\ldots+Q_r(W_r(z))$, then $Q$ satisfies (\ref{eq:moment}) by Proposition~\ref{prop:laufactor}.

Now, suppose that $P$ does not admit a decomposition of the form $P(z)=\tilde{P}(z^l)$,
and that $(1,0,\ldots,0)\in V+W$.
By Proposition~\ref{prop:equiv},
\[
\sum_{k=1}^n mQ(z_k(t))-\sum_{k=n+1}^{n+m} nQ(z_k(t))\equiv 0,
\]
and by analytic continuation and linearity,
\begin{equation}\label{eq:linearQ}
\sum_{k=1}^{n+m} v_i Q(z_i(t))\equiv 0,\quad \text{for every }(v_1,\ldots,v_{n+m})\in V.
\end{equation}

Given a block $B_i$ containing $1$, by the argument above, we have
\[R_{B_i} := \underset{i \in B}{\sum} Q(z_i(t)) = Q_{B_i}(W_{B_i}(z_1)), \]
for some polynomial $Q_{B_i}$ and Laurent polynomial $W_{B_i}$.

If $(1,0,\ldots,0)\in V+W$, then there exists a linear combination
\[
(1,0,\ldots,0) = \sum_i \lambda_i \sigma_i(m,\ldots,m,-n,\ldots,-n) + \sum_j \mu_j w_{B_j}.
\]
Therefore, by (\ref{eq:linearQ})
\[
Q(z_1(t)) = \sum_j \mu_j R_{B_j}(z_1(t)) = \sum_j \mu_j Q_{B_j}(W_{B_j}(z)),
\]
and each $W_{B_j}$ appears as a factor in a decomposition of $P = P_{B_j}\circ W_{B_j}$
where $P_{B_j}$ is a polynomial
\end{proof}

\section{Laurent polynomials up to degree thirty}

Lemma~\ref{le:blocks} and Theorem~\ref{theo:method} give a sufficient condition
on the monodromy group of $P$, such that we can solve the Laurent moment problem.

That is, we can first check if $G_P$ satisfies condition \eqref{B*}.  If so, then the
problem is reducible to a problem of lower degree (and the process repeated).  If not,
we can check whether $G_P$ satisfies condition \eqref{C*}.

Since for every fixed degree $n=\deg(P)$ the monodromy group is a transitive permutation
group of degree $n$, there are only a finite number of possible monodromy groups.  Moreover,
there is a complete list of transitive permutation groups (up to isomorphism) of degree at
most $30$ contained in the computational group theory program GAP \cite{GAP}.

Using GAP, for every transitive permutation group $G$ of degree at most $30$, we consider all possible candidates for
a permutation corresponding to a loop around infinity, that is, we consider all permutations, $g$, which are a product
of two cycles and that move $n$ points (by transitivity, it is only necessary to consider
these cycles up to conjugacy), and all the imprimitivity systems for that group.
Then we can easily check whether condition \eqref{B*} holds or, failing that, whether \eqref{C*} holds.

Moreover, we want to see if they can be realized as monodromy groups of Laurent polynomials.
Given a set of $k$ generators of the group and for each generator a ramification point on the Riemann sphere, we can
invoke Riemann's Existence Theorem to guarantee that this monodromy group is realizable by an algebraic function which is
ramified in exactly these points, and whose monodromy around these points is given by the generators (see \cite{MM}).
Furthermore, we can calculate the genus of the covering given by the algebraic function via the Riemann-Hurwitz formula.
If the genus of the cover is zero for some choice of generators for which the generator around infinity is $g$ as above
then the associated function must be a Laurent polynomial.

The whole process can be automated, and we obtain the following results, where the first row gives the
degree of the permutation group, the second the number of non-isomorphic permutation groups and the final
row the number of exceptions.  For the other possible degrees less than $30$ there are no exceptions (we have not
given the number of possible transitive groups for these degrees).
\begin{center}
\begin{tabular}{crrrrrrrrrr}
$\mbox{Degree}$ & 9 & 10 &  16 & 18 &  20 &   24 & 25 &   27 &  30 \\
$\mbox{Groups}$ &34 & 45 &1954 &983 &1117 &25000 &211 & 2392 &5712 \\
$\mbox{Exceptions}$ & 1 &  2 &   6 &  6 &   3 &   3 &  2 &   31 &   10 \\
\end{tabular}
\end{center}

\begin{theo}  Let $\mathcal{L}_{30}$ be the set
of Laurent polynomials up to degree $30$.

For any $P\in\mathcal{L}_{30}$, if $G_P$ is not one of the exceptional groups in the list above, then
$Q\in \mathcal{L}$ satisfies \eqref{eq:moment} if and only if it is reducible via condition \eqref{conditB} 
to a set of moment equations of lower degree, or satisfies the weak composition condition \eqref{conditC}.  
\end{theo}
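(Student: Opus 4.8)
The plan is to deduce the theorem almost formally from the results already established, together with the finite GAP computation that produces the table. First I would recall the dichotomy set up in Sections 2--3: for a proper Laurent polynomial $P$ of degree $n+m \le 30$, the monodromy group $G_P$ is a transitive permutation group of degree $n+m$, and a loop around infinity gives a permutation $\sigma_\infty$ which is a product of exactly two disjoint cycles moving all $n+m$ points. By Lemma~\ref{le:blocks} and the discussion culminating in condition \eqref{B*}, the existence of a block $B$ with $\{1\}\varsubsetneq B\subseteq\{1,\ldots,n\}$ is equivalent to a decomposition $P=\tilde P(z^l)$ with $l>1$; by Proposition~\ref{prop:zm} this reduces \eqref{eq:moment} to the lower-degree system \eqref{eq:laurent-moment-red}, i.e. to condition \eqref{conditB}. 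So the only case that needs work is when \eqref{B*} fails, and for that case Theorem~\ref{theo:method} gives: if $(1,0,\ldots,0)\in V+W$ (condition \eqref{C*}), then every solution $Q$ of \eqref{eq:moment} satisfies the weak composition condition \eqref{conditC}; conversely Proposition~\ref{prop:laufactor} gives that \eqref{conditC} always forces \eqref{eq:moment}.

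Next I would assemble these pieces into the statement. Fix $P\in\mathcal{L}_{30}$ with $G_P$ not on the exceptional list. If $G_P$ satisfies \eqref{B*}, then by Lemma~\ref{le:blocks} we may write $P=\tilde P(z^l)$ with $l>1$, and Proposition~\ref{prop:zm} (together with its remark) shows that $Q$ satisfies \eqref{eq:moment} precisely when the reduced moment problem for $\tilde P,\tilde Q$ holds, which is exactly condition \eqref{conditB}; since $\deg\tilde P<\deg P$, this is a genuinely lower-degree system. If $G_P$ does not satisfy \eqref{B*}, then — this is the point where the computation enters — by the exhaustive GAP verification summarized in the table, $G_P$ must satisfy \eqref{C*}, because the groups failing both \eqref{B*} and \eqref{C*} are exactly the ones tabulated as exceptions. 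Hence Theorem~\ref{theo:method} applies verbatim: $Q$ satisfies \eqref{eq:moment} if and only if $Q=Q_1\circ W_1+\cdots+Q_r\circ W_r$ with $P=P_i\circ W_i$, $P_i,Q_i\in\mathbb{C}[z]$, $W_i\in\mathcal{L}\setminus(\mathbb{C}[z]\cup\mathbb{C}[z^{-1}])$, which is precisely \eqref{conditC}. The two directions of the "if and only if" are covered respectively by Proposition~\ref{prop:laufactor} and Theorem~\ref{theo:method}.

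The main obstacle, and the part that cannot be reduced to the earlier lemmas, is justifying that the GAP search genuinely covers all possibilities. Concretely one must argue that: (i) running over all transitive permutation groups of degree $\le 30$, all conjugacy classes of two-cycle-product permutations $g$ moving all points (candidates for $\sigma_\infty$), and all imprimitivity systems, and testing \eqref{B*} and then \eqref{C*}, is a finite and correctly implemented computation; (ii) by transitivity it suffices to test the block containing a fixed point $1$ and to take $g$ up to conjugacy; and (iii) any actual Laurent polynomial of degree $\le 30$ has a monodromy group/loop pair appearing in this enumeration — which follows since $G_P$ is transitive of degree $n+m$ and $\sigma_\infty$ is such a product of two cycles. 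One should also note (as the surrounding text does, via Riemann's Existence Theorem and the Riemann--Hurwitz genus-zero condition) that the enumeration is in fact tight, but for the stated theorem only the one-sided fact "every $P\in\mathcal{L}_{30}$ with non-exceptional $G_P$ satisfies \eqref{B*} or \eqref{C*}" is needed.

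A clean way to write the final proof is therefore: (1) state that $G_P$ is transitive of degree $\le 30$ with $\sigma_\infty$ a product of two cycles; (2) dichotomize on \eqref{B*}; (3) in the \eqref{B*} branch invoke Lemma~\ref{le:blocks} and Proposition~\ref{prop:zm} to land in \eqref{conditB}; (4) in the non-\eqref{B*} branch invoke the GAP table to get \eqref{C*}, then Theorem~\ref{theo:method} to get the equivalence with \eqref{conditC}; (5) note the converse directions come from Proposition~\ref{prop:zm} and Proposition~\ref{prop:laufactor}. I expect steps (3)--(4) to be a few lines each, with all the real content already discharged by the earlier results and the computation.
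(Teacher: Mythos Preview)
Your proposal is correct and follows essentially the same route as the paper: the theorem is deduced formally from the dichotomy on \eqref{B*}, invoking Lemma~\ref{le:blocks} and Proposition~\ref{prop:zm} in the \eqref{B*} branch and the GAP computation plus Theorem~\ref{theo:method} and Proposition~\ref{prop:laufactor} in the non-\eqref{B*} branch. The paper does not give a separate formal proof environment for this theorem; the argument is exactly the discussion in the paragraphs preceding the statement, which you have reconstructed accurately.
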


Each of the exceptional groups above can be realized as the monodromy group of a Laurent polynomial, although it is only possible to explicitly calculate these polynomials in simple cases.  We have not been able to verify that each exceptional group does indeed give Laurent polynomials $P$ and $Q$ which satisfy \eqref{eq:moment} but neither \eqref{conditB} nor \eqref{conditC}, but we give in detail below, three of the four simplest cases of exceptional groups, showing how they indeed give such $Q$.  A fourth case is considered by Pakovich, Pech and Zvonkin \cite{Pakovichex}.

For the three cases detailed below, we have computed explicitly all Laurent Polynomials $Q$ of bi-degree $(n-1,m)$ satisfying \eqref{eq:moment} but neither \eqref{conditB} or \eqref{conditC}.  To do this we invoke Theorem~7.1 of \cite{Prat} which says that if 
\[\oint_{|z|=1} P^k(z)dQ(z) = 0, \qquad k=0,1,\ldots,(v-1)\deg(Q)+1,\] where $v$ is the number of distinct vectors in the orbit of $(1,\overset{(n)}{\ldots},1,0,\overset{(m)}{\ldots},0)$, then \eqref{eq:moment} holds.

\begin{rema}
For groups of low degree (up to $9$) we have tried to find how many of the transitive permutations groups can be actually realized as monodromy groups of polynomials or Laurent polynomials.  Although our calculations are not complete, at least half of these groups are realizable as such, and so the small number of exceptional cases in the table above does indeed appear to be noteworthy.
\end{rema}

\subsection{Group \boldmath{$A_5(10)$}}
The two groups of degree $10$ are $S_5$ acting on $10$ elements, for which there is an example non-satisfying the weak composition condition~\cite{Pakovichex}, and $A_5$ acting on $10$ elements ($A_5(10)$), with two possible elections of the cycle of infinity (both having cycle shape $(5)(5)$). The two possible infinity cycles for the group $A_5(10)$ are, up to conjugation, $(1,2,3,6,8)(4,9,7,5,10)$ and $(1,2,9,6,7)(3,8,4,10,5)$.

The elements of $A_5(10)$ 
can have cycle shape $(2)(2)(2)(2)$, $(3)(3)(3)$ or $(5)(5)$. Since there must be $10$ critical points, and
previous permutations have $4$, $6$, and $8$ critical points associated, respectively,
then the unique possibility is that there is exactly two critical values, one of cycle shape
$(2)(2)(2)(2)$ and the other $(3)(3)(3)$. Therefore, choosing one of the critical values as zero, and taking the leading coefficient equal to one, one obtains:
\[
P(z)=\frac{(z^3+a_2z^2+a_1z+a_0)^3(z+d)}{z^5},
\]\[
P(z)-\lambda=\frac{(z^4+b_3z^3+b_2z^2+b_1z+b_0)^2(z^2+c_1z+c_0)}{z^5}.
\]
Now deriving,
\[
\begin{split}
P'(z)=&\frac{(z^3+a_2z^2+a_1z+a_0)^2 Q_1(z)}{z^6}
\\&=\frac{(z^4+b_3z^3+b_2z^2+b_1z+b_0) Q_2(z)}{z^6}.
\end{split}
\]
Since $z^3+a_2z^2+a_1z+a_0$ does not divide $z^4+b_3z^3+b_2z^2+b_1z+b_0$,
then $Q_1(z)=z^4+b_3z^3+b_2z^2+b_1z+b_0$. Therefore, we have the following equation
in $a_0,a_1,a_2,\lambda,d,c_0,c_1$:
\[
P(z)-\lambda-\left(\frac{P'(z) z^6}{(z^3+a_2z^2+a_1z+a_0)^2}\right)^2 \frac{z^2+c_1z+c_0}{z^5}=0.
\]
Solving that equation, the solutions with $a_0\neq 0$ are:
\[
P(z)=\dfrac{(z+d) R^3(z)}{z^5},
\]
where
\[
R(z)=\frac{(123 + 55 \sqrt{5})d^3}{2} + \frac{(29 + 13 \sqrt{5})d^2 z}{2} - (2 + \sqrt{5})d z^2 + z^3,
\]
or
\[
R(z)=\frac{(123 - 55 \sqrt{5})d^3}{2} + \frac{(29 - 13 \sqrt{5})d^2 z}{2} - (2 - \sqrt{5})d z^2 + z^3.
\]
We shall take the first solution and $d=1$. 
It
can be proved that its monodromy group is indeed $A_5(10)$,
for instance, computing its ``dessin d'enfants'' (see Figure~\ref{fig:1} where the circles
and squares denote the pre-images of the two critical values). Moreover,
as $A_5(10)$ has no non-trivial imprimitivity systems, $P$ is indecomposable.

\begin{figure}[h]
\begin{center}
\resizebox{7cm}{!}{\includegraphics{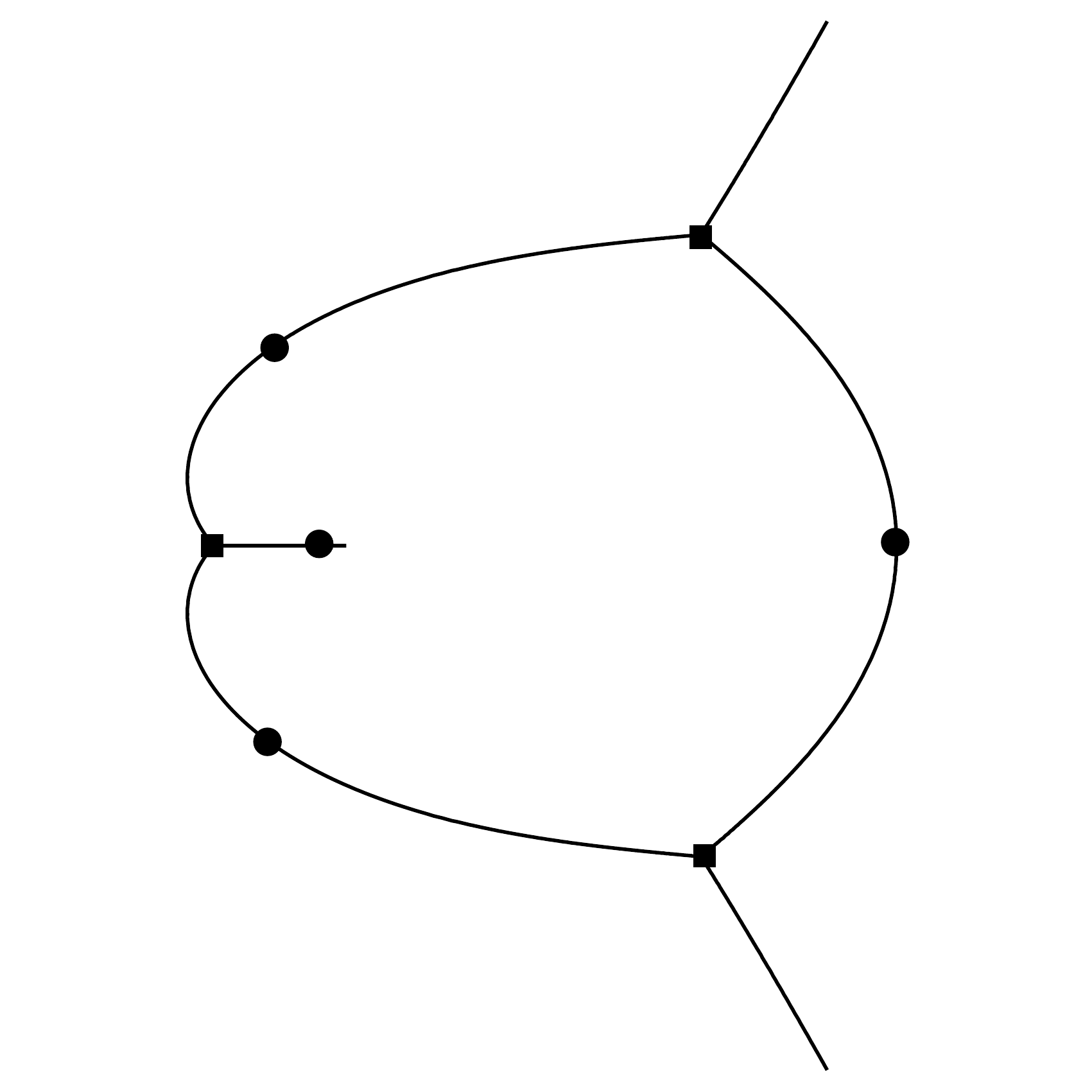}}
\caption{``Dessin d'enfants'' of $P(z)$} \label{fig:1}
\end{center}
\end{figure}

Now, we shall look for the solutions of the form
\[
Q(z)=\sum_{i=-5}^{4} a_i z^i.
\]
Imposing that $\oint_{|z|=1} P^k\,dQ=0$, for $k=0,1,\ldots,5$, one obtains
\[
\begin{split}
a_1 =& \frac{-7 + 3 \sqrt{5}}{2} a_{-1} + 8 \left(-38 + 17 \sqrt{5}\right) a_{-2} +
   3 \left(-928 + 415 \sqrt{5}\right) a_{-3}
 \\&+ 4 \left(445 - 199 \sqrt{5}\right) a_{-4} ,
\end{split}
\]
\[
\begin{split}
a_2 =& \dfrac{-843 + 377 \sqrt{5}}{2} a_{-2} + 3 \left(-3571 + 1597 \sqrt{5}\right) a_{-3}
\\&+ 24 \left(-2889 + 1292 \sqrt{5}\right) a_{-4},
\end{split}
\]
\[
a_3 =  \frac{-15127 + 6765 \sqrt{5}}{2} a_{-3} +
   8\left(-12238 + 5473 \sqrt{5}\right) a_{-4},
\]
\[a_4 = \dfrac{-39603 + 17711 \sqrt{5}}{2} a_{-4},\quad a_{-5}=0.
\]

To prove that $\oint_{|z|=1}P^k\,dQ=0$ for every $k\geq 0$, we use Theorem~7.1 of \cite{Prat}.
This says that it is sufficient to check that $\oint_{|z|=1}P^k\,dQ=0$ for every
$0\leq k\leq (v-1) \deg(Q)+1 = 46$, where we have calculated (via GAP) that $v = 6$.
This can be confirmed very easily via direct computer-aided computation (in our case with 
Mathematica and Maple).

Thus, discounting the constant solution, we have a $4$-dimensional vector space of solutions, $Q$, which do not 
satisfy the weak composition condition.

\subsection{Group \boldmath{$E9:D_8$}}
Now we study the unique group of degree $9$ that we have obtained.
For this group we obtain
that the possible permutation corresponding to a loop
around infinity have cycle shape $(3)(6)$. Moreover, the
permutations of $E9:D_8$ have cycle shape $(2)(2)(2)$, $(2)(2)(2)(2)$, $(3)(3)(3)$, $(4)(4)$, $(3)(6)$.
After trying several combinations, considering two critical values
with permutations associated with cycle shape $(2)(2)(2)$ and $(4)(4)$, and proceeding
as in the group $A_5(10)$, we obtained the following
Laurent polynomial with monodromy group $E9:D_8$ with ``dessin d'enfants'' shown in Figure~\ref{fig:2}
(again we denote the critical points corresponding to one of the critical values with
squares and the corresponding to the other with circles):
\[
P(z)=-\frac{(z-1 )^4 (2 + z) (1 + 2 z)^4}{2 z^3}.
\]
\begin{figure}[h]
\begin{center}
\resizebox{7cm}{!}{\includegraphics[angle=-90]{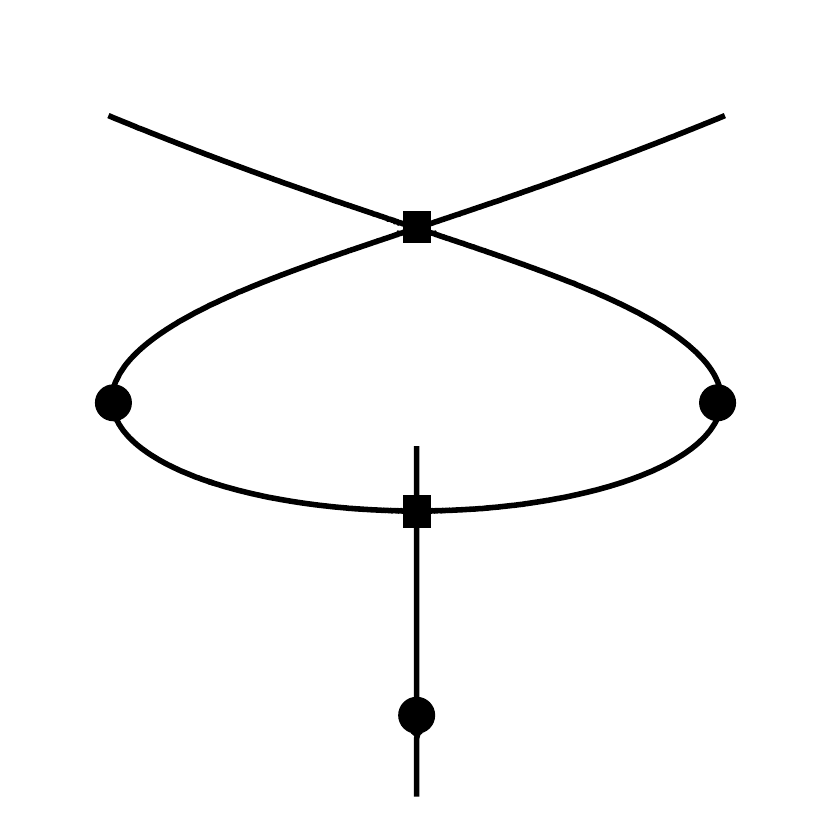}}
\caption{``Dessin d'enfants'' of $P(z)$} \label{fig:2}
\end{center}
\end{figure}
Although it is realizable with Laurent polynomials of degree $9$,
since the permutation corresponding to a loop around infinity has cycle shape $(3)(6)$, the corresponding trigonometric polynomial has degree $12$.

Now we check for the solutions $Q(z)=\sum_{k=-3}^5a_kz^k$, obtaining the three
dimensional space defined by the equations:
\[
a_{-2} = -a_{4}/2,\quad
a_{-1} = -a_{2} - 9 a_{4}/2 + 5 a_{5}/4,\quad
a_{3} = -15 a_{5}/4,\quad a_{-3}=0.
\]
Again, we use Theorem~7.1 of \cite{Prat}, noting that $v$ in this case is $6$,
and check that $\int_{|z|=1}P^k\,dQ=0$ for every $0\leq k\leq (v-1) \deg(Q)+1=41$.

We note that $P$ is indecomposable since the action of $E9:D_8$ has no non-trivial imprimitivity system.
Hence, discounting the constant solution and noting that $a_{1}$ can be chosen arbitrarily in the equations above, 
we obtain a $4$-dimensional space of solutions that do not satisfy the composition condition.

\subsection{Group \boldmath{$t16n195$}}
The simplest case where the action of the monodromy group has a non-trivial imprimitivity system
is the group of degree $16$ with number 195 in GAP. A Laurent polynomial having this group
as its monodromy group (which is represented in Figure~\ref{fig:3}) is
\[
\begin{split}
P(z)=&-940848 + 665280 \sqrt{2} + (89152  - 63040 \sqrt{2}) W(z)
\\&-( 2376  -  1680 \sqrt{2}) W^2(z) + W^4(z),
\end{split}
\]
where,
\[
W(z)=\frac{-99 + 70 \sqrt{2} + (48 - 34 \sqrt{2}) z + (8 - 6 \sqrt{2}) z^3 + z^4}{z^2}.
\]
Since  the group $t16n195$ has a unique non-trivial imprimitivity system, the decomposition above
is the the only one possible.
\begin{figure}[h]
\begin{center}
\resizebox{7cm}{!}{\includegraphics{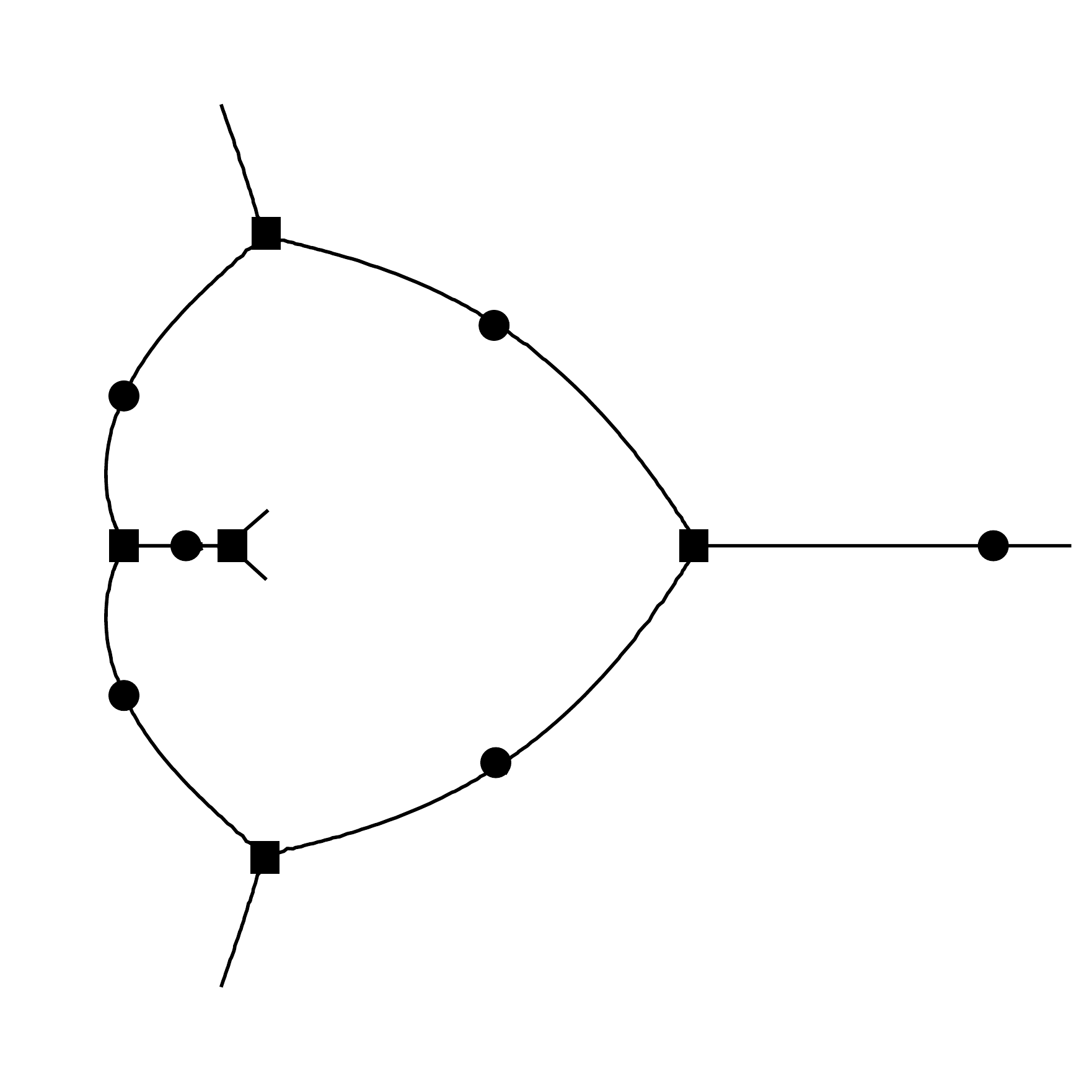}}
\caption{``Dessin d'enfants'' of $P(z)$} \label{fig:3}
\end{center}
\end{figure}

We seek a solution $Q(z)=\sum_{k=-8}^7a_kz^k$.  However, since we can ignore the constant solution, and we already know that
any linear combination of $W$, $W^2$ and $W^3$ are solutions which satisfy the composition conjecture, we shall make the 
assumption that $a_{2k}=0$ for $k=0,1,2,3$.  The remaining solutions must therefore be exceptional ones.

A calculation in GAP shows that $v = 12$ in this case.
Hence, by Theorem~7.1 of \cite{Prat}, it is sufficient to calculate the integral up to $k=155$.  The solutions form 
a 6-dimensional space.  
As the expressions are cumbersome, we only give two generators of this space: 
\[ Q_1 =  1/z^2 + (280 + 198\sqrt{2})z, \qquad Q_2 =  1/z - (17+12 \sqrt{2})z. \]

%
%
%
%


\section{Trigonometric polynomials with real coefficients}\label{tpwrc}

In this section we consider trigonometric polynomials
with real coefficients.  In this case, the Laurent polynomial associated
to the trigonometric polynomial via $z=\exp(i\theta)$, $P$ must satisfy the
equation $\overline{P}(z)=P(1/z)$, where $\overline{P}(z):=\overline{P(\overline{z})}$.
Conversely, any Laurent polynomial
satisfying this equation must correspond to a real trigonometric polynomial.
Every such polynomial must have bi-degree $(n,n)$.

It follows from the above that if $z(t)$ is a solution of $P(z)=t$ for $t\in\mathbb{R}$, $t>>0$, 
then so is $1/\overline{z}(t)$, where $\overline{z}(t) := \overline{z(\overline{t})}$.  Clearly
this map extends to an element of the Galois group $G_P$.  
  
For $t\in\mathbb{R}$, $t>>0$, we label the roots as follows: 
$z_i=k\, w^i\, t^{1/n} + O(1)$ and $z_{n+i} = \overline{k}^{-1}\, w^{1-i}\, t^{-1/n}+O(1)$
where $w$ is a primitive $n$-th root of unity.  The monodromy at infinity corresponds to the
cycle $\sigma_{\infty}:=(1,2,\ldots,n)(n+1,n+2,\ldots,2n)$, and the map $\tau(z_i(t))= 1/\overline{z}_i(t)$ 
is given by the permutation $(1,2n)\cdots(n,n+1)$.  These two elements generate a copy of 
the dihedral group $D_{2n}$.

Adding this extra information into GAP we find that all transitive permutation groups of degree $30$
or less which contain a dihedral subgroup as above must satisfy condition \eqref{B*} or \eqref{C*}.

\begin{theo}\label{theorem2}  Let $\mathcal{L}^*_{30}$ be the set
of Laurent polynomials up to degree $30$ associated to real trigonometric polynomials via $z=\exp(i\theta)$.

For any $P\in\mathcal{L}^*_{30}$, $Q\in \mathcal{L}$ satisfies \eqref{eq:moment} if and only if
either is reducible via condition \eqref{conditB} to a set of moment equations of lower degree, or
satisfies condition \eqref{conditC}.  In the first case, the reduced moment equations still satisfy
the condition $\tilde{P}(z)=\overline{\tilde{P}(1/\overline{z})}$, and so we can iterate this process
of reduction so that all $Q$ satisfying \eqref{eq:moment} can be explained by these two processes.
\end{theo}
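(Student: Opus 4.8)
The plan is to deduce Theorem~\ref{theorem2} essentially as a corollary of the machinery already set up, combined with the GAP computation described just before the statement. First I would recall the two reductions available: condition \eqref{B*} characterizes when $P = \tilde P(z^l)$ for some $l > 1$ (Lemma~\ref{le:blocks}), in which case Proposition~\ref{prop:zm} tells us that $Q$ satisfies \eqref{eq:moment} if and only if the associated $\tilde Q$ satisfies the moment equations for $\tilde P$, which are of strictly smaller degree; and condition \eqref{C*} implies, via Theorem~\ref{theo:method}, that every solution $Q$ of \eqref{eq:moment} satisfies the weak composition condition \eqref{conditC} (the converse direction being Proposition~\ref{prop:laufactor}, which holds unconditionally). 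So the content of Theorem~\ref{theorem2} is that for every $P \in \mathcal{L}^*_{30}$, the monodromy group $G_P$ satisfies \eqref{B*} or \eqref{C*}, together with the observation that the reduction in the \eqref{B*} case preserves the reality constraint.

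The key new ingredient is the reality structure. I would argue as in the paragraph preceding the statement: a real trigonometric polynomial corresponds to $P$ with $\overline P(z) = P(1/z)$, forcing bi-degree $(n,n)$; the involution $\tau: z_i(t) \mapsto 1/\overline z_i(t)$ lies in $G_P$ and, with the labeling chosen there, $\tau$ together with $\sigma_\infty = (1,\ldots,n)(n+1,\ldots,2n)$ generate a dihedral subgroup $D_{2n} \le G_P$ acting in the prescribed way on $\{1,\ldots,2n\}$. Hence $G_P$ is not merely an arbitrary transitive group of even degree $2n \le 30$ containing an element of cycle shape $(n)(n)$, but one containing such a dihedral subgroup in the correct geometric position. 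The GAP enumeration then checks: among all transitive permutation groups $G$ of degree $2n \le 30$, for every embedding of $D_{2n}$ of the above form and every choice of the remaining branching data consistent with genus zero, condition \eqref{B*} or \eqref{C*} holds. This is exactly the computation asserted in the sentence ``Adding this extra information into GAP we find that all transitive permutation groups of degree $30$ or less which contain a dihedral subgroup as above must satisfy condition \eqref{B*} or \eqref{C*}.'' I would state this as the decisive step and cite it.

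Next I would handle the ``iteration'' clause. If \eqref{B*} holds, write $P = \tilde P(z^l)$ and $Q = \tilde Q(z^l) + R$ as in \eqref{conditB}; I must verify that $\tilde P$ again satisfies the reality relation $\tilde P(z) = \overline{\tilde P(1/\overline z)}$. This follows because the substitution $z \mapsto z^l$ commutes with $z \mapsto 1/z$ and with complex conjugation of coefficients: from $\overline P(z) = P(1/z)$ we get $\overline{\tilde P}(z^l) = \overline P(z) = P(1/z) = \tilde P(z^{-l}) = \tilde P(1/z^l)$, and since this holds for all $z$ it holds with $z^l$ replaced by an arbitrary value, giving $\overline{\tilde P}(w) = \tilde P(1/w)$. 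Moreover $\deg \tilde P = n/l \cdot$ (appropriately) $< n$, so $\tilde P \in \mathcal{L}^*_{30}$ with strictly smaller degree, and the induction on degree terminates; the base cases (where $P$ already satisfies \eqref{C*}, or the degree is too small to decompose) are immediate. Combining: every $Q$ satisfying \eqref{eq:moment} is obtained by finitely many $\eqref{conditB}$-reductions ending in a polynomial satisfying $\eqref{conditC}$.

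The main obstacle is simply that the heart of the theorem is the finite GAP verification, which cannot be reproduced by hand; the proof must therefore rely on that computation as a black box, and the delicate point I would be most careful about is ensuring the dihedral subgroup is fed into GAP \emph{in the right permutation-conjugacy class} — i.e. with $\tau$ acting as $(1,2n)(2,2n-1)\cdots(n,n+1)$ relative to $\sigma_\infty$ — since using an abstract $D_{2n}$ rather than this specific action would either miss genuine monodromy groups or spuriously admit impossible ones. The only purely mathematical subtlety, and it is minor, is the reality-preservation of the \eqref{conditB}-reduction, handled as above; everything else is an assembly of Lemma~\ref{le:blocks}, Propositions~\ref{prop:zm} and~\ref{prop:laufactor}, Theorem~\ref{theo:method}, and the tabulated computation.
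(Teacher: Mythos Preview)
Your proposal is correct and follows essentially the same route as the paper: in the paper the theorem is stated as a direct consequence of the GAP verification that every transitive permutation group of degree at most $30$ containing the dihedral subgroup generated by $\sigma_\infty$ and $\tau$ (with $\tau$ in the specific conjugacy class you identify) satisfies \eqref{B*} or \eqref{C*}, together with Lemma~\ref{le:blocks}, Proposition~\ref{prop:zm}, Proposition~\ref{prop:laufactor}, and Theorem~\ref{theo:method}. Your treatment of the reality-preservation under the \eqref{conditB} reduction is in fact slightly more explicit than the paper's, which simply remarks that $\overline{P}(z)=P(1/z)$ forces $\tilde P$ to be real; and your caution about the genus-zero branching data is unnecessary here, since the paper's GAP check in the real case verifies \eqref{B*} or \eqref{C*} for \emph{all} such groups, not merely the realizable ones.
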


\begin{conj}
   For every Laurent polynomial $P$ obtained by a change of variables
from a real trigonometric polynomial, a Laurent polynomial $Q$ satisfies \eqref{eq:moment} if
and only if it satisfies \eqref{conditB} or \eqref{conditC}.
\end{conj}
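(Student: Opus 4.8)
The plan is to reduce the conjecture to a purely group-theoretic statement about the monodromy group and then to attack that statement by the representation-theoretic machinery already used for the polynomial moment problem, now enriched by the dihedral symmetry forced by the reality condition. First I would argue by induction on the degree $n$. If condition \eqref{B*} holds then $P=\tilde P(z^l)$ with $l>1$, and Proposition~\ref{prop:zm} reduces \eqref{eq:moment} to a moment problem of strictly smaller degree; since the reduced polynomial $\tilde P$ again satisfies $\overline{\tilde P}(z)=\tilde P(1/z)$ (as recorded in Theorem~\ref{theorem2}), the inductive hypothesis applies and we land in case \eqref{conditB}. Thus it suffices to treat the case in which \eqref{B*} fails, i.e.\ by Lemma~\ref{le:blocks} and Lemma~\ref{lema:blocks} no imprimitivity system has a block strictly between $\{1\}$ and $\{1,\dots,n\}$ and every non-trivial block system is of type (b). For such $P$, Theorem~\ref{theo:method} shows the conjecture follows from condition \eqref{C*}, namely $(1,0,\dots,0)\in V+W$. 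Hence the conjecture is equivalent to the assertion $(\star)$: \emph{every transitive $G\subseteq S_{2n}$ containing the dihedral subgroup $D_{2n}=\langle\sigma_\infty,\tau\rangle$, with $\tau=(1,2n)(2,2n-1)\cdots(n,n+1)$ and no type-(a) block, satisfies $(1,0,\dots,0)\in V+W$.} This is the statement verified by GAP for $2n\le 30$; the point now is to establish it for all $n$.

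To prove $(\star)$ I would diagonalise the action of $\sigma_\infty$. Writing $\omega=e^{2\pi i/n}$ and $a_k=\sum_{j=1}^n\omega^{-jk}e_j$, $b_k=\sum_{j=1}^n\omega^{-jk}e_{n+j}$ for the top- and bottom-supported eigenvectors, one has $e_1=\tfrac1n\sum_{k=0}^{n-1}a_k$, so it is enough to place each $a_k$ in $V+W$. The cycle vector is $c=a_0-b_0$ and the full block $\mathbf 1=a_0+b_0$ lies in $W$, so $a_0=\tfrac12(\mathbf 1+c)\in V+W$. For $k\neq0$ the indicator of a residue class modulo $d$ has Fourier support concentrated at the frequencies divisible by $n/d$; consequently the block vectors $w_B$ feed exactly the \emph{block frequencies} (multiples of $n/d$ for the block-divisors $d$) and nothing else. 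The problem therefore splits: at block frequencies the symmetric combinations $a_k+b_k$ are supplied by $W$ through an inclusion--exclusion over the minimal block-divisors identical to the one in the commented polynomial computation, while the antisymmetric combinations $a_k-b_k$ must be supplied by $V$; at the remaining non-block frequencies both $a_k$ and $b_k$ must come from $V$ alone.

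The heart of the argument is thus to control the $G$-module $V=\mathbb{C}[G]c$ generated by the single antisymmetric vector $c$. Here the dihedral symmetry is indispensable: since $\tau$ interchanges $a_k$ with $b_{-k}$ and sends $c$ to $-c$, the vector $c$ is forced to have non-zero projection onto every irreducible constituent of $\mathbb{C}^{2n}$ that does not descend to a block quotient. I would make this precise by proving, for the two-cycle-at-infinity (dihedral) setting, the analogue of the classification of irreducible $G$-invariant subspaces used in the polynomial case (Theorem~3.1 of~\cite{Pakovich}): the irreducible $G$-submodules are parametrised by the block-divisors together with a top/bottom symmetry type, and the ``new'' irreducibles at non-block frequencies are exactly those not contained in any block-permutation module. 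Granting this, $c$ meets each such irreducible and, by irreducibility, $V$ contains it entirely, which produces the $a_k$ at non-block frequencies and the needed antisymmetric vectors at block frequencies; combined with the $W$-contribution this gives $e_1\in V+W$.

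The main obstacle is precisely this last claim --- that $c$ generates the entire non-block part of $\mathbb{C}^{2n}$ --- for \emph{arbitrary} degree. It cannot hold on formal grounds alone, since in the general complex case condition \eqref{C*} genuinely fails (the exceptional groups of Section~4), so any proof must use the presence of $D_{2n}$ in an essential way. For imprimitive $G$ one can hope to induct: quotienting by a type-(b) system produces a transitive group of degree $d$ whose infinity element is a single $d$-cycle, i.e.\ a \emph{polynomial}-type monodromy, for which $(1,0,\dots,0)\in V+W$ is known; the difficulty is to lift the block-level reconstruction back through the two-sheeted cover while cancelling the unwanted bottom components, which is exactly where the antisymmetric part of $V$ must be shown to suffice. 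The genuinely hard residual case is that of \emph{primitive} $G$ containing the dihedral $D_{2n}$: here there is no quotient on which to induct, and verifying that the cyclic module $\mathbb{C}[G]c$ fills the whole augmentation submodule appears to require the classification of primitive permutation groups (hence CFSG), or a new structural argument specific to groups admitting such a dihedral subgroup. This is the step left open by the authors, and it is where I expect the real work to lie.
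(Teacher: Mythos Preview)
The statement you are attempting to prove is labelled a \emph{Conjecture} in the paper and is not proved there; the authors only verify it computationally for $2n\le 30$ and establish it for prime $n$ in the subsequent theorem. So there is no ``paper's own proof'' to compare against, and your proposal should be read as a strategy for attacking an open problem rather than as a proof to be checked.

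As a strategy, your reduction is sound: the inductive step via \eqref{B*} and Proposition~\ref{prop:zm} is exactly how the paper handles reducibility, and the observation that the conjecture is equivalent to the purely group-theoretic assertion~$(\star)$ about $V+W$ when \eqref{B*} fails is correct by Theorem~\ref{theo:method}. The Fourier diagonalisation with respect to $\sigma_\infty$ and the splitting into block and non-block frequencies is a natural way to organise the computation, parallel to the polynomial case.

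However, the proposal contains two genuine gaps, both of which you partially acknowledge. First, your asserted classification of the irreducible $G$-submodules of $\mathbb{C}^{2n}$ (``parametrised by the block-divisors together with a top/bottom symmetry type'') is not established and is not obviously true: the polynomial result you invoke (Theorem~3.1 of~\cite{Pakovich}) relies heavily on the single-cycle structure of $\sigma_\infty$, and there is no a priori reason the two-cycle case admits such a clean description for arbitrary $G$ containing $D_{2n}$. Second, and more seriously, even granting such a classification, the claim that the antisymmetric vector $c=a_0-b_0$ projects non-trivially onto every non-block irreducible is exactly the content of the conjecture and is not deduced from anything you have set up; the dihedral element $\tau$ only tells you $c$ is antisymmetric, not that its $G$-orbit is large. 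You correctly note that this step cannot be formal, since the complex counterexamples of Section~5 show \eqref{C*} can fail, but you have not isolated any mechanism by which the presence of $\tau$ actually forces cyclicity of the non-block part. The imprimitive-to-polynomial reduction you sketch is suggestive but incomplete (the lift back through the block quotient is precisely where one needs control of $V$ that you do not yet have), and the primitive case you leave entirely to a hoped-for classification. In short, your outline identifies the right target~$(\star)$ and the right decomposition, but the two load-bearing representation-theoretic claims remain unproved, and the conjecture remains open.
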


We note that when condition \eqref{conditB} holds, and $P=\tilde{P}(z^k)$, then
the decomposition corresponds to expressing the original polynomials in terms of 
$\cos(k\theta)$ and $\sin(k\theta)$.  The condition $\overline{P}(z)=P(1/z)$ then
implies that $\tilde{P}$ is a real polynomial.

When condition \eqref{conditC} holds, we want to show that this decomposition can be written
in such a way as to descend to a decomposition of the associated real trigonometric polynomials.

Let $P(z) = \tilde{P}(W(z))$ be the decomposition, where we can choose $\tilde{P}$ to be a 
monic polynomial without loss of generality.  From the condition $\overline{P}(1/z) = P(z)$
we see that the highest and lowest order terms of $W(z)$ must be in the form $k z^r$ and
$\overline{k} z^{-r}$ for some $k\neq 0$ and $r$.  Now, it is easy to see that  
\[\overline{P}(1/z)-P(z) = (\overline{W}(1/z)-W(z))((n/r)k^{n/r-1}z^{n-r} + \cdots ).\]
Since this second term does not vanish, the first must be zero, and hence $W$ is the Laurent
polynomial associated to a real trigonometric polynomial.  It is straight forward to show
that $\overline{P}(z)=P(1/z)$ implies that $\tilde{P}$ must therefore be a real polynomial.

Now we shall prove that the conjecture is true when the degree of $P$ is $2n$, with $n$ a prime number.

\begin{theo}
Suppose that $deg(P)=n>2$ is a prime number. Then $Q\in\mathcal{L}$ satisfies \eqref{eq:moment} if and only if
one of the following possibilities holds:
\begin{enumerate}
\item $P(z)=\tilde{P}(z^n)$, and
\begin{equation*}
\oint_{|z|=1} \tilde{P}^k(z)\,d\tilde{Q}(z)=0,\quad k=0,1,2,\ldots
\end{equation*}
where $\tilde{Q}(z^n)$ is the sum of the monomials of $Q$ divisible by $z^n$.

\item There exist $W\in\mathcal{L}\backslash(\mathbb{C}[z]\cup \mathbb{C}[z^{-1}])$, $\tilde{P},\tilde{Q}\in \mathbb{C}[z]$, such that $P(z)=\tilde{P}(W(z))$, $Q(z)=\tilde{Q}(W(z))$.
\end{enumerate}
Thus, \eqref{eq:moment} hols if and only if \eqref{conditB} or \eqref{conditC} holds.
\end{theo}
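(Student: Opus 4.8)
The plan is to use that for $\deg P=n$ prime the imprimitivity systems of $G_P$ are essentially rigid, and that the dihedral subgroup $D:=\langle\sigma_\infty,\tau\rangle$ of order $2n$ coming from the real structure cuts the mechanisms producing vanishing moments down to exactly \eqref{conditB} and \eqref{conditC}. The direction ``(1) or (2) $\Rightarrow$ \eqref{eq:moment}'' is immediate (Proposition~\ref{prop:zm} and Proposition~\ref{prop:laufactor} respectively), so only the forward direction needs work.

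First I would read off the block structure. Since $P$ has bi-degree $(n,n)$, in Lemma~\ref{lema:blocks} we have $d\mid\gcd(n,n)=n$, hence $d\in\{1,n\}$, and the only non-trivial imprimitivity systems of $G_P$ are (I) the two-block system $\{H_1,H_2\}$ with $H_1=\{1,\dots,n\}$, $H_2=\{n+1,\dots,2n\}$, and (II) systems consisting of $n$ two-element blocks, each meeting both $H_1$ and $H_2$. By Lemma~\ref{le:blocks} a system of kind (I) is present exactly when \eqref{B*} holds, i.e.\ when $P(z)=\tilde P(z^n)$; Proposition~\ref{prop:zm} then puts us in possibility (1). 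So I may assume henceforth that \eqref{B*} fails, i.e.\ $\{H_1,H_2\}$ is not a block system.

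Under that assumption I would verify \eqref{C*} for $G_P$ and invoke Theorem~\ref{theo:method}. If $G_P$ is imprimitive, fix a system of kind (II): then $G_P$ embeds in $\mathbb{Z}_2\wr G_{P_1}$, where the $n$ blocks are the pairs and $G_{P_1}$ is the induced transitive degree-$n$ action, which still contains a transitive dihedral group. The failure of \eqref{B*} means precisely that $N:=G_P\cap\mathbb{Z}_2^{\,n}$ is not contained in $\{\vec 0,\vec 1\}$; as $N$ is a submodule of $\mathbb{F}_2^{\,n}$ over the dihedral group, and (when the weight-zero module is $D$-irreducible over $\mathbb F_2$) the only such submodules are $0$, $\langle\vec 1\rangle$, $\{v:\sum v_i=0\}$ and $\mathbb{F}_2^{\,n}$, this forces $N\supseteq\{v:\sum v_i=0\}$. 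Applying these sign flips to the cycle vector $v_0=(m,\dots,m,-n,\dots,-n)=n\sum_k(e_{i_k}-e_{i_k'})$ (sum over the blocks $\{i_k,i_k'\}$, $i_k\in H_1$) shows that $V$ contains the full ``block-difference'' subspace, and then $2e_1=(e_1-e_{1'})+w_{\{1,1'\}}\in V+W$ since $w_{\{1,1'\}}=e_1+e_{1'}\in W$; so \eqref{C*} holds. If instead $G_P$ is primitive, it is a primitive group of degree $2n$ containing a regular dihedral subgroup whose reflection part $\tau$ is fixed-point-free; such a group is $2$-transitive, so the sum-zero submodule is irreducible, $V=\{v:\sum v_i=0\}$, and $(1,0,\dots,0)\in V+W$ because $(1,\dots,1)\in W$. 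Either way Theorem~\ref{theo:method} gives $P=P_i(W_i)$, $Q=\sum_i Q_i(W_i)$ with $P_i,Q_i\in\mathbb{C}[z]$.

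Finally I would collapse the sum to a single term. By the descent argument already given in this section, each $P=P_i\circ W_i$ may be taken real-compatible, so after an affine change $W_i$ is the Laurent polynomial of $\cos(\theta-\phi_i)$ and $P(\theta)=P_i(\cos(\theta-\phi_i))$ is symmetric about the axis $\phi_i$; two inequivalent such axes modulo $\pi$ would make $P$ invariant under a non-trivial rotation of order dividing the prime $n$, hence of order $n$, forcing $P(\theta)=a_0+a_n\cos n\theta+b_n\sin n\theta$ and thus \eqref{B*} --- a contradiction. So all $W_i$ are equivalent, $Q=\tilde Q\circ W$ for a single $W$ (the degenerate ``$W=P$, $\tilde P=z$'' case, where $Q$ is a polynomial in $P$, being included), and $\overline P(z)=P(1/z)$ makes $\tilde P$ a real polynomial: this is possibility (2). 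The step I expect to be the real obstacle is the verification of \eqref{C*} when \eqref{B*} fails: in the imprimitive case the submodule list for $\mathbb{F}_2^{\,n}$ over the dihedral group is clean only when $2$ stays ``large enough'' modulo $n$, and for the remaining primes one must separately show that the small monodromy groups then forced still satisfy \eqref{C*} (or in fact satisfy \eqref{B*}); in the primitive case one needs that a primitive group of degree $2p$ with a fixed-point-free regular dihedral subgroup is $2$-transitive, plus ideally a Riemann--Hurwitz genus-$0$ argument to pin down which such groups actually occur as Laurent-polynomial monodromy groups.
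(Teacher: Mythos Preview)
Your overall plan mirrors the paper's: show that \eqref{B*} or \eqref{C*} must hold and invoke Theorem~\ref{theo:method}. Both concerns you flag at the end are exactly where the paper's argument diverges from yours and resolves the difficulties cleanly.

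For the primitive case, the result you are reaching for is classical: by Theorem~25.6 of Wielandt~\cite{Wielandt} (or Theorem~13.11.7 of~\cite{S}), a primitive group containing a regular dihedral subgroup is $2$-transitive. Summing the cycle vector over the stabilizer of $1$ then gives $(2n-1,-1,\dots,-1)\in V$, and with $(1,\dots,1)\in W$ one has \eqref{C*} with $W=P$.

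The imprimitive case is where the approaches really differ. Your route through the $\mathbb{F}_2[D]$-submodule structure of $N=G_P\cap\mathbb{Z}_2^{\,n}$ has two weaknesses. First, the equivalence ``\eqref{B*} fails $\Leftrightarrow N\not\subseteq\{\vec 0,\vec 1\}$'' is not quite right: \eqref{B*} fails exactly when some $(v,\pi)\in G_P$ has $v\notin\{\vec 0,\vec 1\}$, and such an element need not lie in the base group. Second, as you observe, the $\mathbb{F}_2[D]$-submodule lattice of $\mathbb{F}_2^{\,n}$ depends on the order of $2$ modulo $n$. The paper sidesteps both issues by working over $\mathbb{Q}$ and using only the cyclic action of $\sigma_\infty$. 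Writing $v_i=e_i-e_{i'}$ for the block $B_i=\{i,i'\}$, the $\mathbb{Q}$-span of the $v_i$ is, as a $\mathbb{Q}[\sigma_\infty]$-module, isomorphic to $\mathbb{Q}[x]/(x^n-1)$; since $n$ is prime this has exactly the two invariant subspaces $\langle\sum v_i\rangle$ and $\{\sum c_iv_i:\sum c_i=0\}$. If \eqref{B*} fails, take any $\sigma\in G_P$ with $\sigma(\{1,\dots,n\})$ meeting both halves; then $(\sigma(v_0)-v_0)/2=\sum d_iv_i$ with $d_i\in\{0,1\}$, not all equal and not all zero, so its $\sigma_\infty$-orbit spans everything and $v_1\in V$. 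Hence $(1,0,\dots,0)=(v_1+w_{B_1})/2\in V+W$. (The paper also observes that two distinct pair block systems would combine, via the graph whose edges are the blocks, to produce a block contained in $\{1,\dots,n\}$, forcing \eqref{B*}; so one may assume the pair system is unique.)

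A bonus of this route is that a \emph{single} $W$ comes for free: in the primitive case $W=P$, and in the imprimitive case the expression $(1,0,\dots,0)=(v_1+w_{B_1})/2$ already lies in $V+\langle w_{B_1}\rangle$, so Theorem~\ref{theo:method} yields $Q=\tilde Q(W_{B_1})$ with one term. Your final ``collapse'' paragraph is then unnecessary.
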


\begin{proof}
By Theorem~25.6 of \cite{Wielandt} (or Theorem 13.11.7 of \cite{S}), since $G_P$ contains a dihedral group as a regular subgroup then it is
a $B$-group.  This means that if there are no non-trivial blocks then the group is two-transitive.
In this case $W=<(1,\ldots,1)>$. If we sum up the vector $(1,\overset{(n)}{\ldots},1,-1,\overset{(n)}{\ldots},-1)$ moved by the stabilizer of $1$, we get a vector proportional to $(2n-1,-1,\overset{2n-1}{\ldots},-1)$.
Therefore, it is easy to prove that $(1,0,\ldots,0)\in V+W$
and so we are in Case $(2)$ above with $W = P$. 

Suppose therefore, that a non-trivial block containing $1$ is included in $\{1,2,\ldots,n\}$, then in this case
we can conclude that we are in Case $(1)$ by Theorem~\ref{theo:method}.

Suppose  there is no block containing $1$ included
in $\{1,2,\ldots,n\}$. Then every block containing $1$ is of the form
$\{1,1+n+r\}$ for some $0\leq r<n$.  By the way that $\sigma_\infty$ acts, the rest of the blocks are $\{i,i+n+r\}$, $1\leq i\leq n$.
If there are two such block systems, then taking the numbers $\{1,\ldots,2n\}$
as vertices, and the blocks of the two block systems as edges, we get a graph which decomposes into a number of closed
loops with an even number of edges (the vertices alternate between those in $\{1,\ldots,n\}$ and those in $\{n+1,\ldots,2n\}$,
and the edges alternate between those corresponding to the two block groups). Since the group $G_P$ takes the graph to itself, 
it is easy to see that the intersections of the set of vertices in each of these loops with
$\{1,\ldots,n\}$ and $\{n+1,\ldots,2n\}$ give a block system with the block containing $1$ lying within $\{1,2,\ldots,n\}$.
Hence, we are back in Case $(1)$ again.

Finally, we can suppose that we have just one block system, with blocks $B_i = \{i,i+n+r\}$, $1\leq i\leq n$.
Since $\{1,\ldots,n\}$ is not part of a block system, then there is at least one $\sigma\in G_P$ for which $\{1,\ldots,n\}$
has a non-empty image in both $\{1,\ldots,2n\}$ and $\{n+1,\ldots,2n\}$.  

Let $v_i$ be the vector with $1$ in the $i$-th place and $-1$ in the $i+n+r$-th place.  
The element $\sigma_\infty$ of $G_P$ acts on the $v_i$ as a cyclic group of order $n$.
Since $n$ is prime, there are only two rational invariant subspaces of the space generated
by the $v_i$ under the action of $\sigma_\infty$.  The one is generated by $\sum v_i$ and the other 
is generated by all $\sum c_i v_i$ with $\sum c_i = 0$.  Any $\sum d_i v_i$ with $\sum d_i \neq 0$
and with not all $d_i$ the same, must therefore generate the whole space under the action of $\sigma_\infty$.

Let $v=(1,\overset{(n)}{\ldots},1,-1,\overset{(n)}{\ldots},-1) = \sum v_i$, then since the action of $\sigma$ respects the blocks, 
we see that $(\sigma(v)-v)/2 = \sum d_i v_i \in V$, where $d_i \in \{0,1\}$.  By our choice of $\sigma$, not all the $d_i$ are zero nor
are they all $1$ and hence the element $v_1$ must also lie in $V$.  Finally, let $v_{B_1}\in W$ be the vector associated to the block $B_1$,
then we get $(1,0,\ldots,0)=(v_1+v_{B_1})/2\in V+W$.  We are therefore in Case $(2)$.
\end{proof}


\end{document}